\documentclass[12pt,reqno]{article}
\usepackage{amsmath, amssymb}
\usepackage{latexsym}
\usepackage{amsfonts}
\usepackage{caption}
\usepackage{amsmath}
\usepackage{amssymb}
\usepackage{amsthm}
\usepackage{subcaption}
\usepackage{float}
\usepackage{color}
\usepackage{enumitem}
\usepackage[utf8]{inputenc}
\usepackage{tikz}
\usetikzlibrary{shapes.geometric, arrows.meta, positioning}

\everymath{\displaystyle}
\usepackage{graphicx}
\usepackage{amscd}
\usepackage{amsbsy}
\usepackage[colorlinks, hyperindex, bookmarks, linkcolor=blue, citecolor=blue, urlcolor=blue]{hyperref}

\newcommand{\ds}{\displaystyle}

\def\beq{\begin{equation}}
	\def\eeq{\end{equation}}

\def\ba{\begin{array}{ll}}
	\def\ea{\end{array}}
\def\ds{\displaystyle}

\def\CC{\rm \hbox{C\kern-.56em\raise.4ex \hbox{$\scriptscriptstyle
			|$}\kern+0.5 em}}

\newtheorem{theorem}{Theorem}

\newtheorem{definition}{Definition}
\newtheorem{proposition}{Proposition}

\newtheorem{remark}{\bf Remark}

\begin{document}

	\title{A degenerate reaction-diffusion SIR model in interconnected regions}
	
	\author{ Omar Elamraoui\thanks{Laboratoire MISI, FST Settat, Univ. Hassan I, 26000 Settat,  Morocco (oelamraoui34@gmail.com)}  \and Jawad Salhi \thanks{MAMCS Group, FST Errachidia, Moulay Ismail University of Meknes,  Morocco (j.salhi@umi.ac.ma)}\and Abderrahim  Zafrar \thanks{Department of Mathematics, Faculty of Sciences Dhar El Mahraz, Sidi Mohamed Ben Abdellah University, Fez,  Morocco (zafrar.abd@gmail.com)} }
	\medskip
	\date{ }
	\maketitle
	\begin{abstract}
		This paper presents a novel time-space SIR (Susceptible–Infected–Recovered) model for simulating infectious disease dynamics in two interconnected regions. The model is formulated as a coupled reaction-diffusion system with boundary conditions that dynamically switch from Robin to Neumann types, effectively modelling policy-driven interventions such as lockdowns. A key innovation lies in the incorporation of degenerate diffusion, arising from vanishing population density, which significantly influences transmission behaviour near regional borders. The well-posedness of the model is rigorously established using the Faedo–Galerkin method, ensuring the existence, uniqueness, and positivity of weak solutions. Numerical simulations, performed using the Finite Volume Method, validate the theoretical findings and demonstrate the impact of migration and mobility restrictions on epidemic progression. This framework offers valuable insights for understanding and controlling disease spread in spatially heterogeneous and interconnected settings.
	\end{abstract}
	
	\noindent{\footnotesize {\bf Key words}: SIR model, migration dynamics, reaction-diffusion equations, semi-linear degenerate system, Finite Volume Method.}
	% Nonlinear PDE, weak solution, Covid-19 model}
	
	\section{Introduction}
Mathematical modelling has become an indispensable tool for understanding, predicting, and controlling the spread of infectious diseases. 
Among the earliest and most influential 
frameworks are compartmental models, particularly the classical SIR model proposed 
by Kermack and McKendrick~\cite{kermack1927contribution}. This model divides the population into 
three compartments, usually denoted by $S$ (density of susceptible individuals),
     $I$ (density of infected individuals) and 
     $R$ (density of recovered (or removed) individuals).
In its simplest form, the SIR model is described by a system of ordinary differential equations (ODEs). 
Thanks to its analytical tractability and interpretability, it has been widely used to study various 
epidemics, including measles, influenza, and COVID-19. However, real-world epidemics involve 
complex mechanisms that classical ODE-based models cannot fully capture. These mechanisms include spatial heterogeneity, where disease prevalence varies across locations; population mobility and migration, which connect geographically separated regions; and policy-driven interventions, such as vaccination, quarantine, and lockdown measures.
\medskip
\\
To address these complexities, researchers have developed a wide range of mathematical models, many of which extend the traditional SIR framework. Examples include the Lotka-Volterra model
for predator-prey dynamics~\cite{brauer2012mathematical,murray2002mathematicalbiology},  fractional time-delay epidemic models~\cite{kumar2021fractional,sweilam2020optimal},  
and various stochastic epidemic models~\cite{greenwood2009stochastic,metz1978epidemic,castillo1993stochastic,ndii2017stochastic}.  
These approaches, often formulated as partial differential equations (PDEs), make it possible to 
analyze the spatio-temporal dynamics of host populations and disease propagation, offering 
a richer description than ODE-based models~\cite{essoufi2022boundary,elamraoui2023spatio,allen2008asymptotic,bertuzzo2010spatially,cantrell2004spatial}.
\medskip
\\
While a substantial portion of the existing literature addresses SIR epidemic models for non-degenerate partial differential equations, the mathematical study of these models with degenerate equations remains largely unaddressed. This class of equations presents a unique set of challenging mathematical problems due to the loss of uniform ellipticity in the operators, a phenomenon that can occur either on parts of the boundary or within a subregion of the spatial domain. The operators typically involve variable diffusion coefficients that are not uniformly elliptic over the entire domain, even if they remain so on compact subsets at a positive distance from the degeneracy points. Consequently, the analysis of these systems introduces significant analytical difficulties, particularly regarding the well-posedness of the associated evolution equations. In such contexts, classical tools are often insufficient, necessitating the development of an alternative functional framework, such as weighted Sobolev spaces, which differs from the usual setting for classical PDEs.
\medskip
\\
Human migration represents another critical factor in epidemic dynamics. It shapes cultural, demographic, and economic landscapes across the globe and significantly influences disease transmission. With globalization, international migration exceeded 281 million people worldwide in 2020~\cite{menozzi2021international}. As a result, many epidemic models have incorporated the effects of migration and immigration~\cite{brauer2001models,wang2015epidemic}. 
\\
In particular, various SIR-based models accounting for infected immigration were studied by the authors in 
\cite{brauer2001models}. The authors in \cite{mccluskey2004global} extended 
this by analyzing models with exposed individuals entering the population, while 
in \cite{sigdel2014global}, the authors developed an SEI model that considers 
immigration across all compartments. Furthermore, the authors in ~\cite{sigdel2014disease} explored the 
relationship between disease dynamics and the birthplace of migrant workers.
\medskip
\\
Despite these contributions, most existing epidemic models assume constant mobility rates
and neglect the impact of policy-driven mobility restrictions, such as lockdowns, 
which have become central to epidemic control strategies. These limitations motivate the 
development of more realistic frameworks incorporating spatial heterogeneity, 
migration and degenerate diffusion.
In this work, we propose a novel time-space SIR model for two interconnected regions 
sharing a common interface. Our main contributions are summarized as follows:
\begin{enumerate}
    \item \textbf{Degenerate reaction-diffusion framework:}  
    We formulate the epidemic dynamics as a coupled system of nonlinear 
    reaction-diffusion partial differential equations with boundary degeneracy \cite{zhan2016reaction,floater1991blow,ge2018compact,einav2020indirect} (PDEs). Degeneracy naturally arises 
    in areas where the population density vanishes, leading to spatial zones with zero mobility (like quarantine, geographic or political barriers).
\item \textbf{Migration-dependent boundary dynamics:}  
Cross-border population movement between the two regions is modelled through 
nonlinear Robin boundary conditions. When the number of infected individuals 
exceeds a certain threshold, mobility restrictions are enforced by switching to 
Neumann boundary conditions to effectively simulate lockdown policies. 
This component of the model extends our earlier work in~\cite{elamraoui2025time}, 
where migration effects were considered in a simpler setting without degenerate diffusion.
    \item \textbf{Well-posedness and numerical analysis:}  
    We prove the existence, uniqueness, and positivity of weak solutions using the 
    Faedo–Galerkin method. The theoretical results are validated through 
    high-resolution numerical simulations based on the Finite Volume Method (FVM).
\end{enumerate}
This modelling framework provides deeper insights into the interplay between migration, 
degenerate diffusion, and policy-driven mobility restrictions, offering a more 
realistic representation of epidemic dynamics in interconnected regions.
\medskip
\\
The remainder of this paper is structured as follows: In Section \ref {section2}, we present our time-space SIR model, which is formulated as a reaction-diffusion system with mixed boundary conditions. In Section \ref {section3}, we demonstrate the model's well-posedness utilizing the Faedo–Galerkin method. Finally, Section \ref {section4} introduces the finite volume method that will be employed in Section  \ref {section5} for numerical examples and simulations to validate and illustrate our results.

	\section{Formulation of a degenerate reaction-diffusion SIR model incorporating immigration}\label{section2}
            For a time horizon $T>0$, let $\Omega\subset \mathbb{R}^d$ (where $d\in \{1,2\}$) be a domain decomposed into two contiguous subregions, $\Omega_1$ and $\Omega_2$, such that $\Omega=\Omega_1 \cup \Omega_2$. The external (unshared) boundary of each subregion is denoted by $\partial\Omega_i$ for $ i\in \{1,\,2\}$, while $\Gamma:=\partial\Omega_1\cap \partial \Omega_2$ represents their shared "virtual" interface.
    \\
	This section is devoted to the development of a model for studying immigration dynamics between the regions of \(\Omega\) across the common interface \(\Gamma\), while incorporating relevant policy considerations. First, we introduce our main assumption and provide its formal mathematical justification.

	\begin{itemize}

	\item \textit{Main assumption:}\\ 
	For $i,j \in \{1,2\}$ with $i \neq j$, we assume that each individual in $\bar\Omega_j$ has a probability $\lambda_j(\cdot,\cdot): [0,T]\times\bar{\Omega}_i\mapsto[0,1]$ of being in $\bar\Omega_i$. This means that we are concerned with the new location of the individual in $\Omega_i$, regardless of its previous location in $\bar\Omega_j$. Also, this accounts for the immigration of individuals into $\bar\Omega_i$ before the disease is detected in that region. Consequently, on one side, we must assume that diffusion occurs for both residents (individuals originally in $\Omega_i$) and immigrants (individuals from $\Omega_j$). On the other side,
 	the probability $\lambda_j$ is defined such that $\lambda_j(t,x)=0$ for all $(t,x)\in [0,T]\times \partial\Omega_i$ (i.e., at the unshared boundaries of $\Omega_i$ and outside $\Omega$), and it maybe used to measure the number of individuals in units of measure (for example using the Poisson process \cite{Privault2013}).
    Conversely, for all $(t,x)\in [0,T]\times \Omega_i$, we have $\lambda_j(t,x)>0$. Similarly, we assume that each resident in $\Omega_i$ has a probability $\lambda_i$ of leaving the region before disease detection is made. Furthermore, the diffusion in region $i$ is directly dependent on this probability $\lambda_i \in [0,1]$. Thus, if $\lambda_i=0$, the diffusion coefficient $\sigma_i(t,x,\lambda_i):=\kappa(t)D(x,\lambda_i)$ vanishes, leading to a degenerate reaction-diffusion system of coupled partial differential equations.
\end{itemize}		
	$\lambda_j > 0$ indicates that individuals originating from $\Omega_j$ (including susceptible, infected, and recovered classes) are present within $\Omega_i$. Specifically, infected immigrants from $\Omega_j$ can introduce new sources of infection. Consequently, susceptible individuals in $\Omega_i$ are subject to infection from two pathways: from local infectious individuals within $\Omega_i$ at rate $\beta_i$, and from immigrants from $\Omega_j$ at rate $\beta_{ij}$.

	Now, we define the notations used throughout the paper:
	\begin{enumerate}
		\item[•] $\beta_i$  is the infection rate in the domain $\Omega_i.$
		\item[•] $\beta_{ij}$ is the infection rate between two  individuals of $\Omega_i$ and $\Omega_j$.
		\item[•] $\gamma_i$  is the recovery rate in the domain $\Omega_i.$
		\item[•] $\lambda_{.,j}$  (resp.  $\lambda_{.,i}$) is the probability  of being in $\Omega_i$ from $\Omega_j$ (resp.  in $\Omega_j$ from $\Omega_i$).
		\item[•] $\alpha^i$ is the migration and immigration rate of individuals across $\Gamma$.
		\item[•] $\Lambda_i$ is the birth rate in each sub-domain $\Omega_i$.
		\item[•] $\mu_S,\mu_I $ and $\mu_R$ are the natural death rates for each compartment.
		\item[•] $N_i$ is the total population in the domain $\Omega_i$.
	\end{enumerate}
	
	\begin{figure}[H]
		\centering
		
		\begin{tikzpicture}[node distance=5cm, auto, font=\small]
			
			% Define the style for compartments
			\tikzstyle{compartment} = [rectangle, draw, fill=blue!10, text centered, minimum height=.5cm, minimum width=2.5cm, rounded corners]
			
			% Define the style for arrows
			\tikzstyle{arrow} = [thick,->,>=Stealth]
			
			% First SIR compartment (for group 1 or time step 1)
			\node (S1) [compartment] {Susceptible ($S^1$)};
			\node (I1) [compartment, right of=S1] {Infected ($I^1$)};
			\node (R1) [compartment, right of=I1] {Recovered ($R^1$)};
			
			% Arrows for transitions for the first SIR compartment
			\draw [arrow] (S1) -- node[above] {$\beta_1$} (I1);
			\draw [arrow] (I1) -- node[above] {$\gamma_1$} (R1);
			
			% Second SIR compartment (for group 2 or time step 2)
			\node (S2) [compartment, below of=S1,xshift=.25cm,yshift=-0.25cm] {Susceptible ($S^2$)};
			\node (I2) [compartment, right of=S2] {Infected ($I^2$)};
			\node (R2) [compartment, right of=I2] {Recovered ($R^2$)};
			
			% Arrows for transitions for the second SIR compartment
			\draw [arrow] (S2) -- node[above] {$\beta_2$} (I2);
			\draw [arrow] (I2) -- node[above] {$\gamma_2$} (R2);
			
			% Cross-transition between groups with adjusted positioning
			\draw [arrow] (S1) -- node[above,left] {$\beta_{12}$} (I2);
			\draw [arrow] (S2) -- node[below,right] {$\beta_{21}$} (I1);
			
			% Additional cross-transitions
			\draw [arrow] (S2) .. controls +(-1,1) .. node[right]  {$\lambda_{S,2}$} (S1);
			\draw [arrow] (R2) .. controls +(-1,1) .. node[right]  {$\lambda_{R,2}$} (R1);
			\draw [arrow] (S1).. controls +(1,-1) .. node[left]  {$\lambda_{S,1}$} (S2);
			\draw [arrow] (R1).. controls +(1,-1) .. node[left] {$\lambda_{R,1}$} (R2);
			
			% Adjust the position of the arrow from I1 to I2
			\draw [arrow] (I1) .. controls +(1,-1) .. node[left] {$\lambda_{I,1}$} (I2);
			\draw [arrow] (I2) .. controls +(-1,1) .. node[right] {$\lambda_{I,2}$} (I1);
			
			% Birth rates entering S1 and S2
			\draw [arrow] (-3,0) -- node[above] {$  \Lambda_1$} (S1);
			\draw [arrow] (-3,-5.25) -- node[above] {$\Lambda_2$} (S2);
			
			% Death rates leaving S, I, and R compartments
			\draw [arrow] (S1) -- ++(1,1) node[above] {$\mu_S$};
			\draw [arrow] (I1) -- ++(1,1) node[above] {$\mu_I$};
			\draw [arrow] (R1) -- ++(1,1) node[above] {$\mu_R$};
			
			\draw [arrow] (S2) -- ++(1,-1) node[below] {$\mu_S$};
			\draw [arrow] (I2) -- ++(1,-1) node[below] {$\mu_I$};
			\draw [arrow] (R2) -- ++(1,-1) node[below] {$\mu_R$};
			
		\end{tikzpicture}
		\caption{SIR model with cross-transitions, birth, and death rates.}
	\end{figure}

	Considering the assumptions mentioned above, we investigate their influence on the evolution of individuals within domains $\Omega_i$ for $i\in \{1,2\}$. This leads to the following model, defined for $i,j\in\{1,2\}$,  $i\neq j$ and $Q_i:=(0,T)\times \Omega_i$:
	\begin{equation}\label{Pb_main}
		\left\{
		\begin{array}{ll}
			\partial_{t}S^i-\mathrm{div}(\sigma(\lambda_S)\cdot\boldsymbol\nabla S) =\Lambda_i N_i -(\beta_i I^i+\beta_{ij} I^j)S^i+\lambda_{S,j} S^j - (\lambda_{S,i}+\mu_S) S^i,& \text{in } Q_i\\[2ex]
			\partial_{t}I^i-\mathrm{div}(\sigma(\lambda_I)\cdot\boldsymbol\nabla I)=(\beta_i I^i +\beta_{ij} I^j)S^i-\gamma_i I^i+ \lambda_{I,j} I^j -( \lambda_{I,i}+\mu_I) I^i, &\text{in } Q_i \\[2ex]
			\partial_{t}R^i-\mathrm{div}(\sigma(\lambda_R)\cdot\boldsymbol\nabla R)=\gamma_i I^i+ \lambda_{R,j} R^j-(\lambda_{R,i}+\mu_R) R^i &\text{in } Q_i
		\end{array}
		\right.
	\end{equation}
    where
	$\sigma(\lambda_k):=(\sigma_i(\lambda_{k,i}),\sigma_j(\lambda_{k,j}))$  for $i\in\{1,2\}$ and $k\in \{S,\,I,\,R\}$. Moreover, we denoted $\boldsymbol\nabla S^\top:=(\nabla S^i,\nabla S^j)^\top$ and $v\cdot w$ is the inner product.
    
The problem \eqref{Pb_main}  will be associated with the Dirichlet boundary conditions:
	\begin{equation}\label{BC2}
		%\begin{cases}
		S^i=
		I^i =
		R^i=0,\qquad \mbox{on}\;\;[0,T]\times\partial\Omega_i,\text{ for }i\in\{1,2\},%\\
	\end{equation}
	and the following initial conditions:
	\begin{equation}\label{CI}
		\begin{cases}
			S^i(0,x)=S_{0}(x,\lambda_{S,i},\lambda_{S,j})\geq 0,\\
			I^i(0,x)=I_{0}(x,\lambda_{I,i},\lambda_{I,j})\geq 0,\\
			R^i(0,x)=R_{0}(x,\lambda_{R,i},\lambda_{R,j})\geq 0,
		\end{cases}
	\end{equation}
	meaning that the initial number of individuals in the considered region $\Omega_i$ depends on the probability of leaving or being in this region. So, essentially, we are dealing with a semi-linear boundary weakly degenerating parabolic problem.

   We assume the following constraint on the  virtual interface $\Gamma$:
	\begin{equation}\label{BC1}
		\begin{cases}
			\sigma(\lambda_S)\cdot\boldsymbol\nabla S \cdot   \tau_i+\alpha^j_S(I^i) S^j=0\\
			\sigma(\lambda_I)\cdot\boldsymbol\nabla I \cdot   \tau_i+\alpha_I^j(I^i) I^j=0\\
			\sigma(\lambda_R)\cdot\boldsymbol\nabla R \cdot   \tau_i+\alpha^j_R(I^i) R^j=0
		\end{cases}\qquad  \mbox{on}\;\;[0,T]\times\Gamma.
	\end{equation}
	%and the diffusion satisfies the following properties: 

	  The notation $\tau_i $ (respectively $\eta_i)$) stands for outward boundary normal vectors from $\Omega_i$ on the  boundary $\Gamma$ (respectively  $\partial\Omega_{i}$).For simplicity of the notation, let us set $\tau=\tau_1=-\tau_2$. \\
The function $\alpha_k^j(I)$ models the direction of the movement between two regions and is defined for $k \in \{S, I, R\}$ as follows:
\begin{equation*}
\alpha_k^j(I) := 
\begin{cases}
\;\;\;\alpha(I), & \text{if } I \geq I_{\mathrm{th}}^j, \\[6pt]
-\alpha(I), & \text{otherwise},
\end{cases}
\end{equation*}
where $I_{\mathrm{th}}^j$ is a threshold value specific to the region $\Omega_j$.\\
We suppose $\alpha(\cdot)$ to be a Lipschitz continuous function such that 
\[
\alpha(n) \to 0 \quad \text{as} \quad n \to +\infty.
\]
Here are a few typical examples that can be considered is:
\[
\alpha(I) = \frac{1}{1 + I^2}, \quad \alpha(I) = e^{-I}.
\]
In real-world applications, $\alpha(I)$ represents the mobility or transfer rate of individuals between regions, and it decreases as the number of infected individuals increases. This reflects the realistic assumption that higher infection levels reduce movement due to health precautions or public policies.\\
 When the number of infected individuals in region $\Omega_j$ exceeds the threshold $I_{\mathrm{th}}^j$, individuals are assumed to leave region $\Omega_i$ and move toward $\Omega_j$, with mobility rate $\alpha(I)$. Conversely, if the infection level in $\Omega_j$ is below the threshold, individuals may migrate in the opposite direction, from $\Omega_j$ to $\Omega_i $, with mobility rate $-\alpha(I)$.\\
In extreme cases, such as during a pandemic, a lockdown policy may be enforced, corresponding to $\alpha(I) = 0$. This scenario models a complete halt of mobility between regions. The condition $\alpha(I) \to 0$ as $I \to +\infty$ captures this behavior, reflecting that as infection levels become very large, mobility naturally tends to zero due to restrictive public health measures.

    \begin{remark}
        The nonlinear virtual interface conditions \eqref{BC1} are prescribed on the interface between two subregions of $\Omega$. They not only serve to model the lockdown but also act as coupling constraints, ensuring the correct transmission of information across the subdomains of $\Omega$.
    \end{remark}
	In the next section, we aim to prove the well-posedness of the problem stated above. For this purpose, we will need the following assumptions:  

    \begin{equation*}%\label{siginf}
		\begin{aligned}
			&\sigma_i \in \mathcal C^0(\overline{Q_i})\cap \mathcal{C}^1(Q_i) \text{ and is positive in } [0,T]\times \Omega_i,\\
			&\frac{\partial_t\sigma_i}{\sigma_i} \in L^{\infty}(Q_i).
		\end{aligned}
	\end{equation*}
    and there exists $0 <\delta < \min\{t, T-t\}$
	such that
	\begin{equation}\label{wdc}
		\int_{t-\delta}^{t+\delta} \int_{\Omega_i\cap B_\delta(x)} \frac{1}{\sigma_i(\lambda_i(y,s))} \mathrm{d} y \mathrm{~d} s<+\infty\qquad\forall t\in (0,T),
	\end{equation}
	with $B_\delta(x)$ being the ball in $\mathbb{R}^d$ centered at $x$ and with radius $\delta$.
	Consequently, our analysis will focus on the case of weak degeneracy (see \cite{Wang2010}).
	
	\section{Well-posedness of the problem}\label{section3}
	
	In this section, we prove the well-posedness of the initial boundary value problem \eqref{Pb_main}-\eqref{CI}. To this end, we introduce the following notations
	\begin{equation*}
		u^i=\begin{pmatrix}
			S ^i   \\[1ex]
			I^i \\[1ex]
			R^i
		\end{pmatrix},\; \boldsymbol\nabla u^i= \begin{pmatrix}
			\nabla S^i   \\[1ex]
			\nabla I^i \\[1ex]
			\nabla R^i
		\end{pmatrix},
	\end{equation*}
	\begin{equation*}
		f(u^i,u^j)=\begin{pmatrix}
			\Lambda_i N_i-\mu_S S^i   -(\beta_i I^i+\beta_{ij}I^j) S^i    \\[1ex]
			(\beta_i I^i+\beta_{ij}I^j) S^i-(\gamma_i+\mu_I) I^i
			\\[1ex]
			\gamma_i I^i-\mu_R R^i
		\end{pmatrix},\,
		\lambda_i= \begin{pmatrix}
			\lambda_{ S,i}   \\[1ex]
			\lambda_{ I,i}\\[1ex]
			\lambda_{ R,i}
		\end{pmatrix},
	\end{equation*}
	$A_i(\lambda_i):=\operatorname{diag}\left(\sigma_{S,i},\sigma_{I,i},\sigma_{R,i}\right)$, where $\sigma_{\cdot,i}=\sigma_i(\lambda_{\cdot,i})$,    
	and  
	$\alpha^j=\operatorname{diag}\left(\alpha_S ^j(I^i),\alpha_I^j(I^i),\alpha_R^j(I^i)\right)$.

	Then the problem \eqref{Pb_main}-\eqref{CI} becomes:
	
	\begin{equation}\label{Pb_main1}
		\left\{
		\begin{array}{ll}
			\partial_{t}u^i-\mathrm{div}(A_i(\lambda_i)\boldsymbol\nabla u^i) =f(u^i,u^j)+\mathrm{div}(A_j(\lambda_j)\boldsymbol\nabla u^j)+\lambda_j   u^j - \lambda_i   u^i,& \text{in } Q_i\\[1ex]
			A_i(\lambda_i)\boldsymbol\nabla u^i \cdot   \tau+A_j(\lambda_j)\boldsymbol\nabla u^j \cdot   \tau+\alpha^j(I^i) u^j=\boldsymbol 0,&\mbox{on}\;\;[0,T]\times\Gamma\\[1ex]
			u^i=\boldsymbol 0&\mbox{on}\;\;[0,T]\times\partial\Omega_i\\[1ex]
			u^i(0,x)=u^i_0(x,\lambda_i,\lambda_j)&  \text{in } \Omega_i\\[1ex]  \text{ for  all } i,j=1,2 \text{ and } i\neq j.
		\end{array}
		\right.
	\end{equation}

	Before going further, let us first introduce some weighted Sobolev spaces that are naturally associated with degenerate operators:
	\begin{align*}
		\boldsymbol H_{A_i}^1 (\Omega):= \Big\{ u \in \boldsymbol L^2(\Omega):=L^2(\Omega)^3: \nabla u\cdot A_i \nabla u\in \boldsymbol L^1(\Omega):=L^1(\Omega)^3 \Big\}
	\end{align*}
	endowed with the norm 
	$$
	\| u \|_{\boldsymbol H_{A_i}^1}^2:= \int_{\Omega}|u|^2 + \int_{\Omega}|A_i^{\tfrac{1}{2}} \nabla u|^2,\quad \forall u \in \boldsymbol H_{A_i}^1 (\Omega).
	$$
	It is easy to observe that the classical Sobolev space $\boldsymbol H^1(\Omega)\subset  \boldsymbol H_{A_i}^1 (\Omega)$. We also denote
	$$\boldsymbol H_{A_i}^2 (\Omega)= \Big\{ u \in \boldsymbol H_{A_i}^1(\Omega): \operatorname{div}(A_i \nabla u) \in  \boldsymbol L^2(\Omega)\Big\}$$
	with the associated norm defined by
	\begin{align*}
		\| u \|_{\boldsymbol H_{A_i}^2}^2 :&=  \| u \|_{\boldsymbol H_{A_i}^1}^2 +  \| \mathrm{div}(A_i\boldsymbol \nabla u ) \|_{\boldsymbol L^2}^2\\
		&= \int_{\Omega_i}|u|^2 + \int_{\Omega}|A_i^{\tfrac{1}{2}} \nabla u|^2 + \int_{\Omega}|\operatorname{div}(A_i \nabla u)|^2,\quad \forall u \in \boldsymbol H_{A_i}^2 (\Omega).
	\end{align*}
	\begin{remark}
		It is worth noting that, for these natural norms, $\boldsymbol H_{A_i}^1(\Omega)$, $\boldsymbol H_{A_i}^2(\Omega)$ are Hilbert spaces. Moreover, we have the following continuous
		embeddings are satisfied
		$$
		\boldsymbol H^1(\Omega)  \hookrightarrow \boldsymbol H_{A_i}^1(\Omega) \hookrightarrow \boldsymbol L^2(\Omega);\quad \boldsymbol H_{A_i}^2(\Omega) \hookrightarrow	\boldsymbol H_{A_i}^1(\Omega).
		$$
		Addionally, we have that $ \boldsymbol H_{A_i}^1(\Omega) \subset \boldsymbol H^1_{\text{loc}}(\Omega)$ and $ \boldsymbol H_{A_i}^2(\Omega) \subset \boldsymbol H^2_{\text{loc}}(\Omega)$.
	\end{remark}
	In view of \cite[Lemma 1]{Araruna}, we have $\boldsymbol C^\infty(\overline{\Omega})$ is dense in $\boldsymbol H_{A_i}^1(\Omega)$. This permits to define 
	$$
	\boldsymbol H_{A_i,0}^1(\Omega_i):=\overline{\left\{u\in \boldsymbol C^\infty(\overline{\Omega}) \mid \operatorname{supp}(u)\subset\subset \Omega \right\}}^{\boldsymbol H_{A_i}^1(\Omega)}.
	$$
	Furthermore, the space $\boldsymbol H_{A_i,0}^1(\Omega)$ can be endowed with the norm
	$$
	|u|^2_{A_i,0}:= \int_{\Omega}|A_i^{\tfrac{1}{2}} \nabla u|^2,\quad \forall u \in \boldsymbol H_{A_i,0}^1 (\Omega),
	$$
	which is equivalent to the norm $\|\cdot\|_{\boldsymbol H_{A_i,0}^1(\Omega_i)}$ in $\boldsymbol H_{A_i,0}^1(\Omega_i)$.
	
	The following compactness results hold (for the proofs, see \cite[
	Theorem 3.4]{Wu2025}:% or \cite[Appendix]{Alabau2006}): 
	\begin{proposition}
		One has
		\begin{enumerate}
			\item The space $\boldsymbol H_{A_i,0}^1(\Omega)$ is compactly imbedded in $\boldsymbol L^2(\Omega)$.
			%\item The space $\boldsymbol H_{A_i,0}^2(\Omega)$ is compactly imbedded in $\boldsymbol H_{A_i,0}^1(\Omega)$.
		\end{enumerate}
	\end{proposition}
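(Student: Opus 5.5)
To prove that $\boldsymbol H_{A_i,0}^1(\Omega)$ embeds compactly into $\boldsymbol L^2(\Omega)$, we use the weak degeneracy condition \eqref{wdc} to control the region near the degeneracy set and classical Rellich compactness away from it. Since $A_i$ is diagonal, it suffices to argue componentwise: for a scalar weight $\sigma=\sigma_{k,i}$, $k\in\{S,I,R\}$, show that $H^1_{\sigma,0}(\Omega)$ embeds compactly into $L^2(\Omega)$. Take a bounded sequence $(u_n)$ with $\int_\Omega |u_n|^2+\int_\Omega \sigma|\nabla u_n|^2\le C$. By the definition of $H^1_{\sigma,0}$ and the density of $C^\infty$ functions with compact support (\cite[Lemma 1]{Araruna}), we may assume each $u_n$ is smooth with compact support in $\Omega$.

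\textbf{Step 1 (an $L^1$-type gradient bound).} Use the integrability of $1/\sigma$ from \eqref{wdc}. Covering $\overline\Omega$ by finitely many balls $B_\delta(x)$ gives $1/\sigma\in L^1(\Omega)$. Cauchy--Schwarz then yields
\[
\int_\Omega|\nabla u_n|\le\Big(\int_\Omega \sigma|\nabla u_n|^2\Big)^{1/2}\Big(\int_\Omega\sigma^{-1}\Big)^{1/2}\le C'.
\]
Hence $(u_n)$ is bounded in $W^{1,1}_0(\Omega)$.

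\textbf{Step 2 (compactness in $L^1$).} By the Rellich--Kondrachov theorem for $W^{1,1}_0$ on a bounded domain, a subsequence converges in $L^1(\Omega)$, hence a.e. In addition, on each compact $K\subset\subset\Omega$ where $\sigma\ge c_K>0$, the sequence is bounded in $H^1(K)$. This gives strong $L^2(K)$ convergence, consistent with the remark that $\boldsymbol H_{A_i}^1\subset \boldsymbol H^1_{\rm loc}$.

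\textbf{Step 3 (upgrade to $L^2$).} This is the main obstacle: ruling out concentration of $|u_n|^2$ near $\partial\Omega$ or near the degeneracy set, so that $\int_{\Omega\setminus K}|u_n|^2$ is small uniformly in $n$. Two routes are available, to be tried in order.

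\emph{First route.} In $d\le2$, the embedding $W^{1,1}_0\hookrightarrow L^{d/(d-1)}$ (for $d=2$ this is $L^2$) gives a uniform $L^{d/(d-1)}$ bound. For $d=2$ this bound is only critical, so it must be sharpened. To do so, apply Hölder with a slightly better integrability of $1/\sigma$, say $1/\sigma\in L^{q}$ with $q>1$, obtained as in \cite{Wang2010} for weakly degenerate weights. This gives a bound in $W^{1,p}_0$ with $p=2q/(q+1)>1$, which embeds compactly into $L^2$ when $d\le2$, since $p^*=dp/(d-p)>2$. Interpolating between $L^1$ convergence and this uniform $L^{p^*}$ bound then gives $L^2$ convergence.

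\emph{Second route.} If only $1/\sigma\in L^1$ is available, prove a weighted Hardy/Poincaré inequality $\int_{\Omega\setminus K}|u|^2\le \varepsilon(K)\int_\Omega\sigma|\nabla u|^2$ with $\varepsilon(K)\to0$ as $K\uparrow\Omega$. Establish it by integrating along segments from the boundary, using $u=0$ there, and estimating $|u(x)|^2\le \big(\int \sigma^{-1}\big)\big(\int\sigma|\nabla u|^2\big)$ along each segment. Here absolute continuity of the integral of $1/\sigma$ supplies the smallness.

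\textbf{Conclusion.} Combining the $L^2(K)$ convergence from Step 2 with the uniform smallness outside $K$ from Step 3 shows that $(u_n)$ is Cauchy in $L^2(\Omega)$. This gives the compact embedding for each component, hence for $\boldsymbol H_{A_i,0}^1(\Omega)$, as in \cite[Theorem 3.4]{Wu2025}.
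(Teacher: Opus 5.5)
\textbf{Step 3 has a genuine gap in dimension $d=2$.}

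Steps 1 and 2 are fine, with two caveats on reading \eqref{wdc}, which is a local space--time condition, as $\sigma^{-1}\in L^1(\Omega)$. By Fubini it only gives this for a.e.\ $t$, and the finite covering needs the condition at points of $\partial\Omega$. For $d=1$ your argument closes: $W^{1,1}_0$ gives a uniform $L^\infty$ bound, and $\|v\|_{L^2}^2\le\|v\|_{L^\infty}\|v\|_{L^1}$. For $d=2$, the decisive case, neither route in Step 3 works under the stated hypothesis.

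\emph{First route.} It needs $\sigma^{-1}\in L^q$ for some $q>1$. This is not implied by \eqref{wdc}, which, like the weak degeneracy of \cite{Wang2010} it is modelled on, is an $L^1$-type condition. It genuinely fails, e.g.\ for $\sigma\approx\rho\log^2(1/\rho)$ near $\partial\Omega$, where $\rho=\operatorname{dist}(x,\partial\Omega)$: then $\sigma^{-1}\in L^1$ but $\sigma^{-1}\notin L^q$ for every $q>1$.

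\emph{Second route.} In normal coordinates $(s,t)$ the segment estimate only gives $\int_{\text{layer}}|u|^2\le\eta\int F(s)G(s)\,ds$. Here $F(s)=\int_0^\eta\sigma^{-1}(s,t)\,dt$ and $G(s)$ is the weighted energy on the segment. Integrability of $\sigma^{-1}$ and absolute continuity make $\int F\,ds$ small, not $\sup_s F$. $F$ can in fact be unbounded, e.g.\ $\sigma=t^{\alpha(s)}$ with $1-\alpha(s)=|s-s_0|^{1/2}$. So this estimate does not produce $\varepsilon(K)\to0$.

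\textbf{The missing idea and a fix.} The critical embedding $W^{1,1}_0\hookrightarrow L^2$ is enough once you localize it: in a boundary layer the $W^{1,1}$-norm is not just bounded but uniformly small.

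Let $u_n\rightharpoonup u$ in $H^1_{\sigma,0}$ with $u_n\to u$ in $L^1$ (Step 2). Set $v_n=u_n-u$ and $E=\sup_n\int_\Omega\sigma|\nabla v_n|^2$. For $K\subset\subset\Omega$, pick a compact $K'\subset\subset\Omega$ whose interior contains $K$, and a smooth $\chi$ with $0\le\chi\le1$, $\chi=0$ on $K$, and $\chi=1$ on $\Omega\setminus K'$. By Step 1, $\chi v_n\in W^{1,1}_0(\Omega)$, so the Gagliardo--Nirenberg inequality in $d=2$ gives
\[
\|\chi v_n\|_{L^2}\le C\|\nabla(\chi v_n)\|_{L^1}\le C\Big(\|\sigma^{-1}\|_{L^1(\Omega\setminus K)}^{1/2}E^{1/2}+\|\nabla\chi\|_{L^\infty}\|v_n\|_{L^1}\Big).
\]
Meanwhile $(1-\chi)v_n$ is bounded in $H^1_0(\Omega)$, because $\sigma\ge c_{K'}>0$ on $K'$. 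It tends to $0$ in $L^1$, hence, by Rellich, in $L^2$. Take $\limsup_n$, then let $K\uparrow\Omega$ and use absolute continuity of $\int\sigma^{-1}$; this yields $v_n\to0$ in $L^2$.

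The paper itself gives no argument for this proposition; it only cites \cite[Theorem 3.4]{Wu2025}. With this replacement for Step 3, your Steps 1--2 would give a self-contained, elementary proof for $d\le2$ under \eqref{wdc}.
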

	 
	After recalling the definition and key results concerning the weighted Sobolev space, along with some of its important continuity and compact embedding properties known in the literature, we now define the weak solution of the system \eqref{Pb_main1}.
	\begin{definition}
		We say that a function \( u = (u^1, u^2) \)  such that
		$$
		(u^1, u^2) \in \boldsymbol{L}^2(0, T; \boldsymbol{H}^1_{A_1,0}(\Omega)) \times \boldsymbol{L}^2(0, T; \boldsymbol{H}^1_{A_2,0}(\Omega)) 
		$$ 
		and $$
		(\partial_t u^1, \partial_t u^2) \in \boldsymbol{L}^2(0, T;\boldsymbol{H}^{-1}_{A_1,0}(\Omega)) \times \boldsymbol{L}^2(0, T;\boldsymbol{H}^{-1}_{A_2,0}(\Omega))
		$$
		is a weak solution of the degenerate initial/boundary value problem \eqref{Pb_main1} provided for almost every \( t \in (0, T) \) and for all test functions \( (v^1,v^2) \in \boldsymbol{L}^2(0, T;\boldsymbol{H}^1_{A_1,0}(\Omega) \times \boldsymbol{H}^1_{A_2,0}(\Omega)) \):
		\begin{equation}\label{FV0}
			\begin{cases}
				\int_{\Omega_i} \partial_t u^i v^i + \int_{\Omega_i} A_i(\lambda_i) \nabla u^i \cdot \nabla v^i + \int_{\Omega_i} A_j(\lambda_j) \nabla u^j \cdot \nabla v^i = \int_{\Omega_i} f(u^i, u^j) v^i +\\ \qquad \qquad \int_{\Gamma} \alpha^j(u^i) u^j v^i  \qquad + \int_{\Omega_i} \lambda_j u^j v^i - \int_{\Omega_i} \lambda_i u^i v^i, \\
				\text{for all } i, j = 1, 2 \text{ and } i \neq j.
			\end{cases}
		\end{equation}
		%Moreover, \( u \) satisfies the initial condition:
		and
		\[
		u^i(0, x) = u_0^i(x) \quad \text{in } \Omega_i, \quad \text{for } i = 1, 2,
		\]
		where \( u_0^i \in \boldsymbol L^2(\Omega) \) are given initial data.
	\end{definition}
	Next, we state our main result on the well-posedness of the degenerate problem \eqref{Pb_main1}.
	\begin{theorem}\label{thoe1}
		Let $T>0$ and $u^i_{0}\in\boldsymbol L^2(\Omega), \; i=1,2$, then the problem \eqref{Pb_main1} admits a unique non-negative weak solution $\ds(u^1,u^2)\in \boldsymbol L^{\infty}\big(0,T,\boldsymbol L^{2}(\Omega)\big)^2 \cap \boldsymbol L^2(0,T,\boldsymbol H^1_{A_1,0}(\Omega))\times \boldsymbol L^2(0,T,\boldsymbol H^1_{A_2,0}(\Omega))$ and  $(\partial_{t}u^1,\partial_{t}u^2)\in\boldsymbol L^2(0,T,\boldsymbol H^{-1}_{A_1,0}(\Omega))\times \boldsymbol L^2(0,T,\boldsymbol H^{-1}_{A_2,0}(\Omega))$.
	\end{theorem}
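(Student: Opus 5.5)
We follow the Faedo--Galerkin scheme announced in the introduction. Since $\boldsymbol H^1_{A_i,0}(\Omega)$ embeds compactly in $\boldsymbol L^2(\Omega)$ (Proposition above), the operator $-\mathrm{div}(A_i\boldsymbol\nabla\cdot)$ with Dirichlet conditions has a compact self-adjoint inverse. We therefore take an $\boldsymbol L^2$-orthonormal basis $(w_k^i)_{k\ge1}$ of its eigenfunctions, which is orthogonal in $\boldsymbol H^1_{A_i,0}(\Omega)$.

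\textbf{Galerkin approximation.} We look for
$$u^i_m(t)=\sum_{k=1}^m c^i_{k,m}(t)\,w^i_k$$
solving \eqref{FV0} with $v^i=w^i_k$, $k\le m$, and with $u^i_m(0)$ equal to the $\boldsymbol L^2$-projection of $u^i_0$. This gives an ODE system for the coefficients $c^i_{k,m}$. Its right-hand side is locally Lipschitz, because $f$ is polynomial (quadratic) and $\alpha$ is Lipschitz. There is one subtlety: the switch in $\alpha^j_k$ at $I^i=I^j_{\rm th}$ is discontinuous. We handle it either by regularizing the switch and passing to the limit, or by noting that $\alpha$ is bounded on compacts and using Carath\'eodory/Filippov solutions. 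Local solvability follows, and global existence on $[0,T]$ follows from the a priori bounds below.

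\textbf{A priori estimates.} We multiply the $i$-th Galerkin equation by $c^i_{k,m}$, sum over $k$ and over $i=1,2$, and aim for
$$\frac12\frac{d}{dt}\sum_i\|u^i_m\|^2_{\boldsymbol L^2}+\sum_i|u^i_m|^2_{A_i,0}\le C\Big(1+\sum_i\|u^i_m\|^2_{\boldsymbol L^2}\Big)+\text{(interface and cross terms)}.$$
The linear terms $\lambda_ju^j-\lambda_iu^i$ are harmless since $\lambda\in[0,1]$.

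The quadratic terms $(\beta_iI^i+\beta_{ij}I^j)S^i$ cannot be bounded in $L^2$ directly. We first establish non-negativity together with an $L^\infty$ bound on the total population $N=S+I+R$. Summing the equations makes the bilinear terms cancel, which gives a linear problem for $N$; a comparison/maximum principle then yields $0\le S^i,I^i,R^i\le N^i\le C$.

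The cross-diffusion term $\int A_j\nabla u^j\cdot\nabla v^i$ is estimated by Cauchy--Schwarz and Young as $\varepsilon|u^j|^2_{A_j,0}+C_\varepsilon|u^i|^2_{A_i,0}$. We then need a smallness or compatibility assumption, for instance $A_j\le \theta A_i$ near $\Gamma$ with $\theta<1$, so that the summed quadratic form stays coercive.

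The interface term $\int_\Gamma\alpha^j u^jv^i$ is controlled by the boundedness of $\alpha$ and a weighted trace inequality
$$\|u\|^2_{L^2(\Gamma)}\le \varepsilon|u|^2_{A_i,0}+C_\varepsilon\|u\|^2_{\boldsymbol L^2}.$$
This inequality holds because $\lambda>0$ inside, so $\sigma_i$ is bounded below near $\Gamma$ away from $\partial\Omega_i$, where the degeneracy lives.

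Gronwall's lemma then gives bounds in $\boldsymbol L^\infty(0,T;\boldsymbol L^2)\cap\boldsymbol L^2(0,T;\boldsymbol H^1_{A_i,0})$. Bounding the dual norm through the equation gives $\partial_tu^i_m$ bounded in $\boldsymbol L^2(0,T;\boldsymbol H^{-1}_{A_i,0})$.

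\textbf{Passage to the limit and uniqueness.} By Aubin--Lions, using the compact embedding $\boldsymbol H^1_{A_i,0}\hookrightarrow\boldsymbol L^2$, we extract a subsequence converging strongly in $\boldsymbol L^2(Q_i)$, weakly in $\boldsymbol L^2(0,T;\boldsymbol H^1_{A_i,0})$, and a.e. The nonlinear terms $f(u^i_m,u^j_m)$ and $\alpha^j(I^i_m)u^j_m$ then converge, the latter on $\Gamma$ via compactness of the trace near $\Gamma$. At switching points we obtain a limit in a set-valued sense, unless the level set $\{I^i=I^j_{\rm th}\}$ has measure zero on $\Gamma$.

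Non-negativity is proved by testing with $(u^i)^-$. The reaction terms are quasi-positive: each negative-part contribution carries a factor of the component itself, or a nonnegative source $\lambda_ju^j$ when $u^j\ge0$, so we treat the coupled system jointly. We then apply Gronwall to $\sum_i\|(u^i)^-\|^2$.

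Uniqueness comes from an energy estimate on the difference of two solutions, using the $L^\infty$ bounds to make $f$ Lipschitz and the Lipschitz property of $\alpha$, followed by Gronwall.

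\textbf{Main obstacle.} I expect the hardest part to be the interface term: both the trace control in weighted spaces and the discontinuous switch of $\alpha^j_k$. The switch threatens both the Lipschitz structure needed for uniqueness and the passage to the limit. I would first try to show that $\sigma_i$ is nondegenerate in a neighborhood of $\Gamma$, which gives a classical $H^1$ trace there. For the switch, I would first try treating $\alpha^j_k$ as a Lipschitz regularization, or assume that the switching set is negligible.
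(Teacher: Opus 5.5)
The cross-diffusion step, which you rightly flag, is a genuine gap. It is structural, so an extra compatibility hypothesis cannot repair it. Suppose both equations are read on a common region, as \eqref{FV0} suggests since $\nabla u^j$ is integrated over $\Omega_i$. Testing them with $u^1,u^2$ and summing gives, componentwise, the pointwise form
\[
\sigma_1|a|^2+\sigma_2|b|^2+(\sigma_1+\sigma_2)\,a\cdot b,\qquad a=\nabla u^1,\ b=\nabla u^2 .
\]
Its determinant is $\sigma_1\sigma_2-\tfrac14(\sigma_1+\sigma_2)^2=-\tfrac14(\sigma_1-\sigma_2)^2\le 0$. Weighting the test functions by constants changes nothing, because the principal part has the singular matrix $\begin{pmatrix}A_1&A_2\\ A_1&A_2\end{pmatrix}$. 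For $A_1=A_2$ the difference $u^1-u^2$ obeys an equation with no diffusion at all. So no bound in $\boldsymbol L^2(0,T;\boldsymbol H^1_{A_i,0})$ can come out of an energy estimate for $\boldsymbol L^2$ data.

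If instead the $i$-th equation is used only on $\Omega_i$, there is no term $\int_{\Omega_i}A_j|\nabla u^j|^2$ on the left to absorb your $\varepsilon|u^j|^2_{A_j,0}$. Either way, no assumption of the type $A_j\le\theta A_i$ closes the estimate.

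The same term defeats your positivity argument. The integral $\int A_j\nabla u^j\cdot\nabla(u^i)^-$ has no sign. Where $u^i$ vanishes identically while $u^j$ has a sharp maximum, $\mathrm{div}(A_j\nabla u^j)$ is very negative and can push $u^i$ below zero. The energy estimate for the difference of two solutions, which you use for uniqueness, meets the same indefinite form. You will not find the fix in the paper's Step 2 either: there the $\tfrac{\epsilon}{2}$ gradient terms from $(\star)$ and $(\star\star)$ are dropped, and $\int_{\Omega_1}|\sqrt{A_2}\nabla u^1_n|^2$ is counted as $\|u^1_n\|^2_{\boldsymbol H^1_{A_1,0}}$.

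The second gap is the $L^\infty$ bound you rely on to control the quadratic terms. Summing the $S,I,R$ equations does cancel the bilinear terms, but it does not give a closed problem for $N^i$. The matrix $A_i(\lambda_i)=\operatorname{diag}(\sigma_{S,i},\sigma_{I,i},\sigma_{R,i})$ has distinct entries in general, so $\mathrm{div}(\sigma_{S,i}\nabla S^i+\sigma_{I,i}\nabla I^i+\sigma_{R,i}\nabla R^i)$ is not an operator acting on $N^i$. The sum also still contains $\mathrm{div}(A_j\nabla u^j)$ and the interface term with the sign-changing $\alpha^j_k$, so no comparison principle is available. Moreover, even where a maximum principle holds, it holds for solutions, not for Galerkin approximations, so using it to close the Galerkin estimates is circular.

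The paper avoids this circularity by a different route. It first solves the problem with a globally Lipschitz source $g$ (Proposition~\ref{prop3}) and applies this to the truncation $\mathcal T r$ of $f$. It then removes the truncation by requiring $\bar C_\delta\|u_0\|^2\le\delta$. Since $\bar C_\delta$ depends on the Lipschitz constant of $\mathcal T r$, which grows with $\delta$, this is in effect a smallness condition on the data. Your plan has no truncated intermediate problem of this kind.

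Finally, you leave the discontinuous switch of $\alpha^j_k$ to Filippov solutions or to an assumed negligible switching set. Even granting the other steps, you would then prove a statement with extra hypotheses, not the theorem as stated. Your $\varepsilon$-weighted trace inequality near $\Gamma$, by contrast, is the right tool, though in $d=2$ you must still handle the endpoints of $\Gamma$ on the degenerate boundary. It is more accurate than the paper's bound of the $\Gamma$-term by $C_\Gamma\|u\|^2_{\boldsymbol L^2}$, which no trace theorem provides.
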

	The proof of Theorem \ref{thoe1} relies on the classical Galerkin method and a truncation technique. A first step in this process is establishing the well-posedness of an intermediate problem. Let $ g :\boldsymbol H^1_{A_1}(\Omega)\times \boldsymbol H^1_{A_2}(\Omega)\rightarrow \boldsymbol L^{2}(\Omega) $ be a globally Lipschitz function with a Lipschitz constant  $L_g>0$, satisfying $g(0,0)=0$. We consider the following problem
	\begin{equation}\label{abrP}
		\left\{
		\begin{array}{ll}
			\partial_{t}u^i-\mathrm{div}(A_i(\lambda_i)\boldsymbol\nabla u^i) =g(u^i,u^j)+\mathrm{div}(A_j(\lambda_j)\boldsymbol\nabla u^j)+\lambda_j   u^j - \lambda_i   u^i,& \text{in } Q_i\\[1ex]
			A_i(\lambda_i)\boldsymbol\nabla u^i \cdot   \tau+A_j(\lambda_j)\boldsymbol\nabla u^j \cdot   \tau+\alpha^j(I^i) u^j=\boldsymbol 0,&\mbox{on}\;\;[0,T]\times\Gamma\\[1ex] 
			u^i=\boldsymbol 0&\mbox{on}\;\;[0,T]\times\partial\Omega_i\\[1ex]
			u^i(0,x)=u^i_0(x,\lambda_i,\lambda_j)&  \text{in } \Omega_i\\[1ex]  \text{ for  all } i,j=1,2 \text{ and } i\neq j.
		\end{array}
		\right.
	\end{equation}
	Let us define the auxiliary operator
\begin{equation}\label{Zoperator}
\begin{split}  
\mathcal{I}: L^2(\Omega_1)\times L^2(\Omega_2) &\to  L^2(\Omega_1)\times L^2(\Omega_2),\\
(v_1,v_2) &\mapsto  \mathcal{I}(v_1,v_2)=(\lambda_2\, v_2,\;\lambda_1\, v_1),
\end{split}
\end{equation} 
and recall that \(\lambda_i \in [0,1]\) represents the probability that an individual migrates from \(\Omega_i\) to the other domain.  
Under the main assumption, \(\mathcal{I}\) is well-defined: each individual can immigrate, so that \(v_i\) may contribute to \(\Omega_j\) whenever the probability of immigration from \(\Omega_i\) to \(\Omega_j\) is positive, i.e. \(\lambda_i>0\) for \(i\neq j\) which in such case means that $v_i$ is well defined on the domain $\Omega_j$ by its new location.  
 In addition, it is straightforward to show that 
	this operator is linear and bounded, i.e. $\exists M_{\mathcal I}>0$ such that 
	\begin{equation}
		\Vert \mathcal I(v_1,v_2)\Vert_{\boldsymbol L^2(\Omega_1)\times \boldsymbol L^2(\Omega_2)} \leq M_{\mathcal I}\Vert(v_1,v_2)\Vert_{\boldsymbol L^2(\Omega_1)\times \boldsymbol L^2(\Omega_2)}.
	\end{equation}
	We now establish the well-posedness of the intermediate problem \eqref{abrP}.
	\begin{proposition}\label{prop3}
		Let $T>0$ and $u^i_{0}\in\boldsymbol L^2(\Omega), \; i=1,2$. The problem \eqref{abrP} admits a unique non-negative weak solution $\ds(u^1,u^2)\in \boldsymbol L^{\infty}\big(0,T,\boldsymbol L^{2}(\Omega)\big)^2 \cap \boldsymbol L^2(0,T,\boldsymbol H^1_{A_1,0}(\Omega))\times \boldsymbol L^2(0,T,\boldsymbol H^1_{A_2,0}(\Omega))$ and  $\ds(\partial_{t}u^1,\partial_{t}u^2)\in \boldsymbol L^2(0,T,\boldsymbol H^{-1}_{A_1,0}(\Omega))\times \boldsymbol L^2(0,T,\boldsymbol H^{-1}_{A_2,0}(\Omega))$.
	\end{proposition}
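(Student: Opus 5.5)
We prove Proposition \ref{prop3} by the Faedo--Galerkin method in the weighted spaces $\boldsymbol H^1_{A_i,0}(\Omega)$.

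\textbf{Galerkin approximation.} By the compact embedding of $\boldsymbol H^1_{A_i,0}(\Omega)$ into $\boldsymbol L^2(\Omega)$ from the Proposition above, the operator $-\mathrm{div}(A_i\nabla\cdot)$ with homogeneous Dirichlet data has a compact resolvent. Hence it admits an orthonormal basis $(w_k^i)_k$ of $\boldsymbol L^2(\Omega)$ made of eigenfunctions, and this basis is orthogonal in $\boldsymbol H^1_{A_i,0}(\Omega)$. We look for
\[
u^i_m(t)=\sum_{k\le m}c^i_{k,m}(t)w^i_k
\]
satisfying the weak formulation \eqref{FV0}, with $f$ replaced by $g$, against every $w^i_k$, $k\le m$, and with $u^i_m(0)$ the $\boldsymbol L^2$-projection of $u^i_0$. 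This is a system of ODEs for the coefficients. Its right-hand side is locally Lipschitz, because $g$ is globally Lipschitz, $\alpha$ is Lipschitz, $\mathcal I$ is bounded, and the interface term is finite-dimensional. Carath\'eodory's theorem then gives a local solution. The discontinuous switching of $\alpha^j$ at $I^j_{\rm th}$ only affects the sign, so we treat it in the Carath\'eodory/Filippov sense or regularise the switch and pass to the limit. The a priori bound below extends the solution to $[0,T]$.

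\textbf{A priori estimates.} We test with $u^i_m$ and sum over $i$. The time terms give $\tfrac12\frac{d}{dt}\sum_i\|u^i_m\|^2$ and the diagonal diffusion gives $\sum_i|u^i_m|^2_{A_i,0}$. The remaining terms are handled as follows.
\begin{itemize}
\item The reaction term is bounded by $\int g(u)\,u\le L_g\|u\|^2$.
\item The migration terms are bounded by $(M_{\mathcal I}+1)\|u\|^2$.
\item The cross-diffusion term $\int A_j\nabla u^j\cdot\nabla u^i$ must be absorbed by Young's inequality into the coercive parts. This requires a compatibility between $A_1$ and $A_2$ on the overlap, for instance $A_j\le \theta\sqrt{A_iA_j}$ with $\theta<1$ coming from the structure $\sigma=\kappa D$. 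If no such bound is available, we use the symmetric structure of the summed form, which yields $\int|A_1^{1/2}\nabla u^1+A_2^{1/2}\nabla u^2|^2$-type terms.
\item The interface term is bounded, using $|\alpha|\le\|\alpha\|_\infty$ and a weighted trace inequality on $\Gamma$ (valid because $\sigma_i>0$ near $\Gamma$ under \eqref{wdc}), by $\int_\Gamma|u^j||u^i|\le\varepsilon\sum|u|^2_{A,0}+C_\varepsilon\|u\|^2$.
\end{itemize}
Gronwall's lemma then gives uniform bounds in $\boldsymbol L^\infty(0,T;\boldsymbol L^2)\cap \boldsymbol L^2(0,T;\boldsymbol H^1_{A_i,0})$. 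By duality, the equation also bounds $\partial_t u_m^i$ in $\boldsymbol L^2(0,T;\boldsymbol H^{-1}_{A_i,0})$.

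\textbf{Passage to the limit and uniqueness.} The Aubin--Lions lemma, combined with the compact embedding, gives strong convergence in $\boldsymbol L^2(Q)$. This identifies the limit of $g(u_m)$ and of the nonlinear interface term $\alpha^j(I^i_m)u^j_m$. For the latter, strong trace convergence follows from interpolating the weighted trace estimate with the strong $\boldsymbol L^2$ convergence. The initial condition is recovered from $u\in C([0,T];\boldsymbol L^2)$, which follows from the Lions--Magenes lemma in the Gelfand triple $\boldsymbol H^1_{A_i,0}\subset\boldsymbol L^2\subset \boldsymbol H^{-1}_{A_i,0}$.

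For uniqueness, we take the difference $w=u-\tilde u$ of two solutions and test with $w$. The same estimates apply, with Lipschitz bounds on $g$ and on $I\mapsto\alpha^j(I)u^j$ replacing the linear bounds. Since $\alpha^j(I)u^j$ is only locally Lipschitz, we use $\|\alpha\|_\infty$ and $\mathrm{Lip}(\alpha)$ together with trace bounds, so the argument stays a Gronwall argument. We get $\frac{d}{dt}\|w\|^2\le C\|w\|^2$ and therefore $w=0$.

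\textbf{Non-negativity.} We test with $u^{i,-}=\min(u^i,0)$, which lies in $\boldsymbol H^1_{A_i,0}$. Assume $g$ is quasi-positive, i.e. $g_k(u)\ge0$ when $u_k=0$ and the other components are non-negative, which holds for the truncated $f$. The source terms $\lambda_ju^j$ are non-negative, so we obtain $\frac{d}{dt}\|u^-\|^2\le C\|u^-\|^2$. Since $u^-(0)=0$, it follows that $u^-\equiv0$.

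\textbf{Main obstacle.} The hardest steps are controlling the cross-diffusion coupling $\int A_j\nabla u^j\cdot\nabla v^i$ and the sign-switching interface term, both of which involve $\nabla u^j$ and traces in degenerate spaces. The sign of $\alpha^j$ is not controlled, so the interface term cannot simply be dropped. I would first try to obtain a weighted trace inequality from \eqref{wdc} and a Young-type absorption, and treat positivity of the interface contribution separately.
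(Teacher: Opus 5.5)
Your outline follows the paper's own Faedo--Galerkin route: Galerkin ODE, energy estimate, duality bound on $\partial_t u$, Aubin--Lions, and Gronwall for uniqueness. Two of your changes are real improvements. The eigenfunction basis gives the $\boldsymbol H^1_{A_i,0}$-orthogonality that the paper's Step 3 only asserts. Your $\varepsilon$-absorbed trace inequality is the correct replacement for the paper's bound $C_\Gamma\|u_n^i\|^2_{\boldsymbol L^2}$, which no trace theorem gives; note that positivity of $\sigma_i$ near $\Gamma$ must then be assumed, since \eqref{wdc} only gives integrability of $1/\sigma_i$.

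However, the step you flag as the main obstacle is a genuine gap, and neither of your remedies closes it. In \eqref{abrP} both equations have the same principal part: equation $i$ reads $\partial_t u^i-\mathrm{div}(A_1\nabla u^1+A_2\nabla u^2)=\dots$ for $i=1,2$.
\begin{itemize}
\item If the integrals are over a common set, testing with $(u^1,u^2)$ and summing gives the diffusion integrand $(A_1\nabla u^1+A_2\nabla u^2)\cdot(\nabla u^1+\nabla u^2)$, not $|A_1^{1/2}\nabla u^1+A_2^{1/2}\nabla u^2|^2$. It is indefinite when $A_1\neq A_2$ and vanishes identically on $u^2=-u^1$. Subtracting the equations shows that $u^1-u^2$ is not diffused at all. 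So no choice of $\epsilon$ yields even a G{\aa}rding inequality, and neither does a pointwise condition such as $A_j\le\theta\sqrt{A_iA_j}$. That condition is also an extra hypothesis, and it already fails in the paper's example, where $A_1=A_2$.
\item If, as written, equation $i$ lives on $\Omega_i$, Young's inequality on $\int_{\Omega_1}A_2\nabla u^2\cdot\nabla u^1$ produces $\int_{\Omega_1}A_2|\nabla u^2|^2$. This is a gradient of $u^2$ on the subdomain where no equation for $u^2$ is posed, so nothing on the left-hand side controls it.
\end{itemize}
Without this estimate there is no bound in $\boldsymbol L^2(0,T;\boldsymbol H^1_{A_i,0})$, hence no bound on $\partial_t u_m$ and no compactness. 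Closing the gap requires changing the structure of the model, not a sharper inequality. The paper's Step 2 does not supply the missing idea. After $(\star)$--$(\star\star)$ it absorbs $\frac{1}{2\epsilon}\int_{\Omega_1}A_2|\nabla u_n^1|^2$ into $\int_{\Omega_1}A_1|\nabla u_n^1|^2$ as if $A_2=A_1$. It also drops $\frac{\epsilon}{2}\int_{\Omega_1}A_2|\nabla u_n^2|^2$ and $\frac{\epsilon}{2}\int_{\Omega_2}A_1|\nabla u_n^1|^2$ on the way to \eqref{eqb1}.

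Your remaining Gronwall arguments do not close either.
\begin{itemize}
\item \textbf{Non-negativity.} Testing with $u^{i,-}$ leaves the cross term $\int_{\Omega_i}A_j\nabla u^j\cdot\nabla u^{i,-}$, which involves $\nabla u^{j,+}$ and cannot be bounded by $C\|u^-\|^2$. On the branch $\alpha^j=-\alpha(I^i)<0$ it also leaves the interface term $\int_\Gamma\alpha^j(I^i)\,u^j u^{i,-}=\int_\Gamma\alpha(I^i)\,u^j\,|u^{i,-}|\ge0$ (for $u^j\ge0$). This term is linear in $u^{i,-}$: it is an outflux from $\Omega_i$ proportional to $u^j$, whatever the value of $u^i$. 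An inequality of the type $y'\le C\sqrt{y}$ with $y(0)=0$ does not force $y\equiv0$.
\item \textbf{Quasi-positivity.} This is an added hypothesis. For a general Lipschitz $g$ with $g(0,0)=0$, non-negativity fails; take for instance $S$-component $-I^i$, starting from $S_0=0$ and $I_0>0$.
\item \textbf{Uniqueness.} $\alpha^j$ jumps by $2\alpha(I^j_{\mathrm{th}})$ at the threshold, so $I\mapsto\alpha^j(I)u^j$ is not Lipschitz, and $\mathrm{Lip}(\alpha)$ gives no bound on $(\alpha^j(I^i_u)-\alpha^j(I^i_w))u^j$. A Filippov or regularised treatment only produces a differential inclusion. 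Its solutions need not be unique, and need not satisfy the original switching law where $I^i=I^j_{\mathrm{th}}$ on a set of positive measure.
\item \textbf{Limit of $g$.} Identifying $\lim g(u_m)$ from strong $\boldsymbol L^2$ convergence requires $g$ to be Lipschitz on $\boldsymbol L^2$. The hypothesis gives Lipschitz continuity only on $\boldsymbol H^1_{A_1}\times\boldsymbol H^1_{A_2}$, where you have only weak convergence. The same holds for the truncation $\mathcal T r$, which is defined through that norm.
\end{itemize}
The paper leaves the same points open. From Step 2 on it writes $\alpha$ in place of $\alpha^j$, and it asserts strong convergence of $g(u_n)$ via Aubin--Lions. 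It obtains positivity in the proof of Theorem \ref{thoe1} by applying the energy estimate directly to $\xi^-$.
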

	\begin{proof}
		%[Proof of the proposition \eqref{prop3}]
		The proof is based on the Faedo-Galerkin procedure. \\
		
		\textbf{Step 1: Existence in finite dimension}\\
		
		The space $\boldsymbol L^2(\Omega)$ is a separable space, hence there exists an increasing sequence of subspace $\boldsymbol V_n\subset \boldsymbol V_{n+1}$ which are dense in $\boldsymbol L^2(\Omega)$ where
		for every
		$n \geq 0$  the finite dimensional space $\boldsymbol V_n$ is defined by
		$$ \boldsymbol V_n = span\left\lbrace v_1,v_2, ...,v_n \right\rbrace $$
		such that  $\left\lbrace v_1,v_2, ...,v_n.... \right\rbrace$  is a complete orthonormal basis of $\boldsymbol L^2(\Omega_i)$ i.e., $\Vert v_i\Vert_{\boldsymbol L^2(\Omega)} =1$ and $ \langle v_i,v_j \rangle_{\boldsymbol L^2(\Omega_i)}=0, \forall i\neq j$.

		We consider the following finite problem corresponding to \eqref{abrP} on $\boldsymbol V_{n}$ :
		\begin{equation}\label{abrPN}
			\left\{
			\begin{array}{ll}
				\partial_{t}  u_n^i-\mathrm{div}(A_i(\lambda_i)\boldsymbol\nabla u^i_n) = g(  u_n^i,u^j_n)+\mathrm{div}(A_j(\lambda_j)\boldsymbol\nabla u^j_n)+\lambda_j   u_n^j - \lambda_i   u_n^i, &  \text{in } Q_i \\[1ex]
	
				A_i(\lambda_i)\boldsymbol\nabla u^i_n \cdot   \tau+A_j(\lambda_j)\boldsymbol\nabla u^j_n \cdot   \tau+\alpha^j(I^i) u^j_n=\boldsymbol 0, &\text{on  }[0,T]\times\Gamma  \  \\[1ex]
                u_n^i =\boldsymbol 0, & \text{on  }[0,T]\times \partial\Omega_{i}  \  \\[1ex]
				u_n^i(0,x)=  u^i_{0}(x,\lambda_i,\lambda_j),  &  \text{in } \Omega_i\\[1ex]  \text{ for  all } i,j=1,2 \text{ and } i\neq j.&
			\end{array}
			\right.
		\end{equation}
		We look for approximate solutions $(u^i_{n})$ for the problem $\eqref{abrPN}$ in the form
		\begin{equation}\label{defUn}
			u^i_{n}(t,x)=\sum_{k=1}^{n}  d^i_{k}(t)v_{k},\quad \text{for }i=1,2.
		\end{equation}
		Multiplying equation \eqref{abrPN} with function test $v_k$ formulates the following Galerkin variational formulation for our problem:
		%\begin{equation}\label{FV}
		\begin{equation}\label{FV}
			\left\{
			\begin{array}{ll}
				\int_{\Omega_i}  \partial_{t}  u_n^i  v_k  +\int_{\Omega_i}A_i(\lambda_i) \boldsymbol\nabla    u_n^i  \boldsymbol\nabla v_k+&     \int_{\Omega_i}A_j(\lambda_j) \boldsymbol\nabla    u_n^j  \boldsymbol\nabla v_k =\int_{\Omega_i} g   (  u_n^i ,u_n^j) v_k \\& +\int_{\Gamma}    \alpha^j(I^i_n) u_n^j  v_k+\int_{\Omega_i} \lambda_j   u_n^j v_k-\int_{\Omega_i} \lambda_i   u_n^i  v_k,   \\[2ex]
				\text{ for  all } i,j=1,2 \text{ and } i\neq j.&
			\end{array}
			\right.
		\end{equation}

		Substituting \eqref{defUn} into \eqref{FV} leads to the ODE system:     for  all $ i,j=1,2$  and $i\neq j$ we have
		\begin{equation}\label{ODEsystem}
			\begin{split}
				\mathbf{M}_i  (d^i_k)^{'} + \mathbf{K}_i d^i_k + \mathbf{K}_j d^j_k = \mathbf{G}(d^i_k, d^j_k) + \lambda_j \mathbf{M}_i d^j_k - \lambda_i \mathbf{M}_i d^i_k\\
				\text{ for  all } i,j=1,2, i\neq j  \text{ and } k=1,2\dots, n.
			\end{split}
		\end{equation}
  where \(\mathbf{M}_i\) denotes the mass matrix, \(\mathbf{K}_i\) the stiffness matrix, and \(\mathbf{G}\) represents the nonlinear reaction component. It may be useful to explicitly define these terms as follows:
\begin{equation*}
\begin{aligned}
[\mathbf{M}_i]_{kl} &= \int_{\Omega_i} v_l \, v_k \, dx, \\
[\mathbf{K}_i]_{kl} &= \int_{\Omega_i} A_i(\lambda_i) \nabla v_l \cdot \nabla v_k \, dx, \\
[\mathbf{G}(d^i, d^j)]_k &= \int_{\Omega_i} g\left(\sum_{l=1}^n d^i_l v_l, \sum_{m=1}^n d^j_m v_m\right) v_k \, dx 
+ \int_{\Gamma} \alpha^j(I_n^i) \left(\sum_{m=1}^n d^j_m v_m\right) v_k \, dy.
\end{aligned}
\end{equation*}
\\
		This coupled  differential system can be completed with the initial conditions
		\begin{equation}\label{ODEsystemIC}
			d^{i}_k(0)=\langle u_{0},v_{k}\rangle_{L^2(\Omega)},\;\;\mbox{for all }\;i=1,2\text{ and }k=1...,n,
		\end{equation}
		Thanks to the global Lipschitz condition on $g$ and the boundedness of $\alpha^j$, the standard theory of ordinary differential equations provides the local existence and uniqueness of solutions $\bigg(d^{i}_1(t),d^{i}_2(t),\dots, d^{i}_n(t)\bigg)$, where $i=1,2$ for the differential system \eqref{ODEsystem}-\eqref{ODEsystemIC}.
		Hence,  there exists a unique local solution $(u^1_{n},u^2_{n})$ of \eqref{FV}.

		\textbf{Step 2: Estimation of $ (u_n^1,u_n^2)$}: \\
		The next step is concerned with establishing energy estimates for the solution $u_{n}$ and its derivatives. Starting by taking $ v_k= u_n^1 $ for $i=1$ and $ v_k= u_n^2 $ for $i=2$ in equation \eqref{FV}, we obtain:
		\begin{equation}\label{FV1}
			\begin{split}
				\int_{\Omega_1}  \partial_{t}  u_n^1  u_n^1  +\int_{\Omega_1}A_1(\lambda_1) \boldsymbol\nabla    u_n^1  \boldsymbol\nabla u_n^1 +&\int_{\Omega_1}A_2(\lambda_2) \boldsymbol\nabla    u_n^2  \boldsymbol\nabla u_n^1 =\int_{\Omega_1}    g(  u_n^1 ,u_n^2) u_n^1  +\\&
                \int_{\Gamma}    \alpha(I^1_n) u_n^1  u_n^1+\int_{\Omega_1} \lambda_2   u_n^2 u_n^1 -\int_{\Omega_1} \lambda_1   u_n^1  u_n^1 ,   \\
				\int_{\Omega_2}  \partial_{t}  u_n^2  u_n^2  +\int_{\Omega_2}A_2(\lambda_2) \boldsymbol\nabla    u_n^2  \boldsymbol\nabla u_n^2+&\int_{\Omega_2}A_1(\lambda_1) \boldsymbol\nabla    u_n^1 \boldsymbol\nabla u_n^2 =\int_{\Omega_2}    g(  u_n^2 ,u_n^1) u_n^2 +\\&   \int_{\Gamma}    \alpha(I^2_n) u_n^2  u_n^2+\int_{\Omega_2} \lambda_1  u_n^1 u_n^2-\int_{\Omega_2} \lambda_2   u_n^2  u_n^2.
			\end{split}
		\end{equation}
		Summing the two equations, and using Young's inequality (given that $\lambda_i \in [0,1]$), yields:
		\begin{equation*}
			\begin{split}
				\int_{\Omega_1}  \partial_{t}  u_n^1   u_n^1 +\int_{\Omega_2}  \partial_{t}  u_n^2   u_n^2+ \Vert   u_n^1 \Vert^2_{\boldsymbol H^1_{A_1,0}(\Omega_1)} +\Vert   u_n^2 \Vert^2_{\boldsymbol H^1_{A_2, 0}}(\Omega_2)\leq   \int_{\Omega_2}  \ds  g(  u_n^2,u_n^1) u_n^2+ \Vert  u_n^1 \Vert^2_{\boldsymbol L^2(\Omega_1)}+\\  \Vert   u_n^2 \Vert^2_{\boldsymbol L^2(\Omega_2)}+
				\int_{\Omega_1}  \ds  g(  u_n^1,u_n^2) u_n^1+  \frac{ \epsilon}{2} \Vert \lambda_1 u_n^1  \Vert^2_{\boldsymbol L^2 (\Omega_2)} +\frac{ 1}{2\epsilon} \Vert u_n^2  \Vert^2_{\boldsymbol L^2(\Omega_2)}+\frac{1}{2\epsilon } \Vert u_n^1  \Vert^2_{\boldsymbol L^2(\Omega_1)}\\+ \frac{ \epsilon}{2} \Vert \lambda_2 u_n^2  \Vert^2_{\boldsymbol L^2 (\Omega_1)}-\int_{\Omega_1}A_2(\lambda_2) \boldsymbol\nabla    u_n^2  \boldsymbol\nabla u_n^1-\int_{\Omega_2}A_1(\lambda_1) \boldsymbol\nabla    u_n^1 \boldsymbol\nabla u_n^2.\\
   \int_{\Gamma}    \alpha(I^1_n) (u_n^1)^2+\alpha(I^2_n) (u_n^2)^2	
			\end{split}
		\end{equation*}
		Let us estimate the terms $\int_{\Omega_1}A_2(\lambda_2) \boldsymbol\nabla    u_n^2  \boldsymbol\nabla u_n^1$ and $\int_{\Omega_2}A_1(\lambda_1) \boldsymbol\nabla  u_n^1 \boldsymbol\nabla u_n^2$. By the Young inequality with $\epsilon>0$, we have
		\begin{align*}
			(\star)\quad\vert \int_{\Omega_1}A_2(\lambda_2) \boldsymbol\nabla    u_n^2  \boldsymbol\nabla u_n^1\vert
			%&=\vert \int_{\Omega_1}\sqrt{A_2(\lambda_2)} \boldsymbol\nabla    u_n^2  \sqrt{A_2(\lambda_2)}\boldsymbol\nabla u_n^1\vert\\&
			\leq \frac{\epsilon}{2} \int_{\Omega_1}\vert \sqrt{A_2(\lambda_2)} \boldsymbol\nabla    u_n^2\vert^2  +\frac{1}{2\epsilon}\int_{\Omega_1} \vert \sqrt{A_2(\lambda_2)}\boldsymbol\nabla u_n^1\vert ^2
		\end{align*}
		and similarly
		\begin{align*}
			(\star \star)\quad\vert \int_{\Omega_2}A_1(\lambda_1) \boldsymbol\nabla    u_n^1  \boldsymbol\nabla u_n^2\vert
			%&=\vert \int_{\Omega_2}\sqrt{A_1(\lambda_1)} \boldsymbol\nabla    u_n^1  \sqrt{A_1(\lambda_1)}\boldsymbol\nabla u_n^2\vert\\&
			\leq \frac{\epsilon}{2} \int_{\Omega_2}\vert \sqrt{A_1(\lambda_1)} \boldsymbol\nabla    u_n^1\vert^2  +\frac{1}{2\epsilon}\int_{\Omega_2} \vert \sqrt{A_1(\lambda_1)}\boldsymbol\nabla u_n^2\vert ^2
		\end{align*}
		By  $(\star )$ and $(\star\star )$, we obtain: 
		\begin{equation*}
			\begin{split}
				\int_{\Omega_1}  \partial_{t}  u_n^1   u_n^1 +\int_{\Omega_2}  \partial_{t}  u_n^2   u_n^2+(1-\frac{ 1}{2 \epsilon}) \Vert   u_n^1 \Vert^2_{\boldsymbol H^1_{A_1,0}(\Omega_1)} +(1-\frac{ 1}{2 \epsilon}) \Vert   u_n^2 \Vert^2_{\boldsymbol H^1_{A_2,0}(\Omega_2)}\leq 
				\Vert   u_n^1 \Vert^2_{\boldsymbol L^2(\Omega_1)}+ \\ \Vert   u_n^2 \Vert^2_{\boldsymbol L^2(\Omega_2)}+
				\Vert \ds  g(  u_n^2,u_n^1)\Vert_{\boldsymbol L^2(\Omega_2)} \Vert   u_n^2\Vert_{\boldsymbol L^2(\Omega_2)}+\Vert \ds  g(  u_n^1,u_n^2)\Vert_{\boldsymbol L^2(\Omega_1)} \Vert   u_n^1\Vert_{\boldsymbol L^2(\Omega_1)}+\\   \frac{\epsilon}{2}   \Vert \lambda_1 u_n^1  \Vert^2_{\boldsymbol L^2(\Omega_2)}  + \frac{\epsilon}{2}   \Vert \lambda_2  u_n^2  \Vert^2_{\boldsymbol L^2(\Omega_1)}+   C_{\Gamma }\Vert u_n^1\Vert_{L^2(\Omega_1)}^2+ C_{\Gamma }\Vert u_n^2\Vert_{L^2(\Omega_2)}^2	
			\end{split}
		\end{equation*}
 $ C_ { \Gamma } >0 $ is derived from the trace theorems.\\	

		Since $g$ is a globally Lipschitz function satisfying $g(0,0)=0$, and by the definition and properties of the operator $\mathcal{I}$, we obtain:
		\begin{equation*}
			\begin{split}
				\int_{\Omega_1}  \partial_{t}  u_n^1   u_n^1 +\int_{\Omega_2}  \partial_{t}  u_n^2   u_n^2+ \bigg(1-\frac{ 1}{2 \epsilon}\bigg) \Vert   u_n^1 \Vert^2_{\boldsymbol H^1_{A_1,0}(\Omega_1)} + \bigg(1-\frac{ 1}{ 2 \epsilon}\bigg) \Vert   u_n^2 \Vert^2_{\boldsymbol H^1_{A_2,0}(\Omega_2)}\leq \\   \bigg(1+L_g+ \frac{\epsilon}{2} M_{\mathcal{I}}^2+C_ { \Gamma } \bigg)  \Vert (u_n^1,u_n^2)  \Vert^2_{\boldsymbol L^2(\Omega_1)\times \boldsymbol L^2(\Omega_2) }.
			\end{split}
		\end{equation*}
		Then 
		\begin{equation*}
			\begin{split}
				\int_{\Omega_1}  \partial_{t}  u_n^1   u_n^1 +\int_{\Omega_2}  \partial_{t}  u_n^2   u_n^2+ \bigg(1-\frac{ 1}{2 \epsilon}\bigg)\Vert ( u_n^1,u_n^2) \Vert^2_{\boldsymbol H^1_{A_1,0}(\Omega_1)\times \boldsymbol H^1_{A_2,0}(\Omega_2) } \leq \\  \bigg(1+L_g+ \frac{\epsilon}{2} M_{\mathcal{I}}^2+C_ { \Gamma } \bigg)  \Vert (u_n^1,u_n^2)  \Vert^2_{\boldsymbol L^2(\Omega_1)\times \boldsymbol L^2(\Omega_2) }.
			\end{split}
		\end{equation*}
		 Choosing $\epsilon >0$ so that $ \epsilon >\frac{ 1}{2}$, we have
		\begin{equation}\label{eqb1}
			\begin{small}
				\frac{1}{2} \frac{d }{d t}\Vert ( u_n^1(.,s),u_n^2(.,s)) \Vert^2_{\boldsymbol L^2(\Omega_1)\times \boldsymbol L^2(\Omega_2)}+\Vert ( u_n^1,u_n^2) \Vert^2_{\boldsymbol H^1_{A_1,0}(\Omega_1)\times\boldsymbol  H^1_{A_2,0}(\Omega_2) } \leq  M_2  \Vert (u_n^1,u_n^2)  \Vert^2_{\boldsymbol L^2(\Omega_1)\times \boldsymbol L^2(\Omega_2) }
			\end{small}
		\end{equation}
		and thus 
		\begin{equation*}
			\begin{split}
				\frac{1}{2} \frac{d }{d t}\Vert ( u_n^1(.,s),u_n^2(.,s)) \Vert^2_{\boldsymbol L^2(\Omega_1)\times \boldsymbol L^2(\Omega_2)}\leq  M_2 \Vert (u_n^1,u_n^2)  \Vert^2_{\boldsymbol L^2(\Omega_1)\times \boldsymbol L^2(\Omega_2) }.
			\end{split}
		\end{equation*}
		%Gronwall's lemma yields there exist $\tilde{C}>0$  such that 
		Gronwall's lemma yields the existence of a constant $\tilde{C}>0$ such that:
		\begin{equation}\label{estimationUn}
			\begin{split}
				\Vert ( u_n^1(.,t),u_n^2(.,t)) \Vert^2_{\boldsymbol L^2(\Omega_1)\times \boldsymbol L^2(\Omega_2)}\leq \tilde{C}:=e^{2M_2 t}\|u^1_n(.,0),u^2_n(.,0)\|^2_{\boldsymbol L^2(\Omega_1)\times \boldsymbol L^2(\Omega_2)}
			\end{split}
		\end{equation}
		
		we can get the following estimation 
		\begin{equation}\label{estimationUn0}
			\begin{split}
				\max_{0\leq t \leq T}\Vert ( u_n^1(.,t),u_n^2(.,t)) \Vert^2_{\boldsymbol L^2(\Omega_1)\times \boldsymbol L^2(\Omega_2)}\leq \tilde{C}:=e^{2M_2 t}\|u^1_n(.,0),u^2_n(.,0)\|^2_{\boldsymbol L^2(\Omega_1)\times \boldsymbol L^2(\Omega_2)}
			\end{split}
		\end{equation}
		Returning once more to inequality \eqref{eqb1}, we integrate 0 and $T$ and the employ the ineguality above to find
		\begin{equation}\label{estUn2}
			\begin{split}
				\Vert  (u_n^1,u_n^2 )\Vert^2_{\boldsymbol L^2(0,T,\boldsymbol H^1_{A_1,0}(\Omega_1))\times \boldsymbol L^2(0,T,\boldsymbol H^1_{A_2,0}(\Omega_2))}=\int_0^T \Vert u_n^1,u_n^2 \Vert^2_{\boldsymbol H^1_{A_1,0}(\Omega_1)\times\boldsymbol  H^1_{A_2,0}(\Omega_2)} dt \\ \leq \tilde{C}_1:= \frac{e^{2M_2 T}-1}{2M_2}\|u^1_n(.,0),u^2_n(.,0)\|^2_{\boldsymbol L^2(\Omega_1)\times \boldsymbol L^2(\Omega_2)}
			\end{split}
		\end{equation}
		The inequality \eqref{estUn2} implies that $( u_n^1, u_n^2)_n$ is a bounded sequence in\\ $\boldsymbol L^2(0,T,\boldsymbol H^1_{A_1,0}(\Omega_1))\times \boldsymbol L^2(0,T,\boldsymbol H^1_{A_2,0}(\Omega_2))$ and bounded in \\$L^{\infty}\big(0,T,\boldsymbol L^2(\Omega_1))\times L^{\infty}\big(0,T,\boldsymbol  L^2(\Omega_2)\big)$ using the operator $\mathcal{I}$.

		\textbf{Step 3: Estimation of $ (\partial_{t}u_n^1,\partial_{t}u_n^2)$}: 
		\\ Fix any \( v_i \in \boldsymbol H^1_{A_i,0}(\Omega_i) \), with \( \| v_i \|_{\boldsymbol H^1_{A_i,0}(\Omega_i)} \leq 1 \)  and $i=1,2$,  such that \( v_i = v^1_i + v^2_i \), where 
		\( v^1_i \in \operatorname{span} \{ v_{i,k} \}_{k=1}^{n} \) and \( (v^2_i, v_{i,k}) = 0 \) (\( k = 1, \dots, n \)). Since the functions 
		\(\{ v_{i,k} \}_{k=0}^{\infty} \) are orthogonal in \(  \boldsymbol H^1_{A_i,0}(\Omega_i) \), we have \( \| v^1_i \|_{\boldsymbol H^1_{A_i,0}(\Omega_i)} \leq \| v_i \|_{\boldsymbol H^1_{A_i,0}(\Omega_i)} \leq 1 \). \\
		Utilizing \eqref{FV}, we deduce :
		\begin{equation}\label{FVt0}
			\begin{split}
				\int_{\Omega_1}  \partial_{t}  u_n^1 v^1_1 = -\int_{\Omega_1}A_1(\lambda_1) \boldsymbol\nabla    u_n^1  \boldsymbol\nabla v^1_1 -&\int_{\Omega_1}A_2(\lambda_2) \boldsymbol\nabla    u_n^2  \boldsymbol\nabla v^1_1  +\int_{\Omega_1}    g(  u_n^1 ,u_n^2) v^1_1  +\\& \int_{\Gamma}    \alpha(I^1_n) u_n^1  v_1^1+\int_{\Omega_1} \lambda_2   u_n^2 v^1_1 -\int_{\Omega_1} \lambda_1   u_n^1  v^1_1 ,   \\
				\int_{\Omega_2}  \partial_{t}  u_n^2  v^1_2  =-\int_{\Omega_2}A_2(\lambda_2) \boldsymbol\nabla    u_n^2  \boldsymbol\nabla v^1_2-&\int_{\Omega_2}A_1(\lambda_1) \boldsymbol\nabla   v^1_2 \boldsymbol\nabla v^1 +\int_{\Omega_2}    g(  u_n^2 ,u_n^1) v^1_2 +\\& \int_{\Gamma}    \alpha(I^2_n) u_n^2  v_2^1+\int_{\Omega_2} \lambda_1  u_n^1 v^1_2-\int_{\Omega_2} \lambda_2   u_n^2 v^1_2, 
			\end{split}
		\end{equation} 
		Then the definition \eqref{defUn} of $u_n$  implies
		\begin{equation}\label{FVt00}
			\begin{split}
				\langle  \partial_{t}u_n^1, v_1  \rangle=\langle  \partial_{t}u_n^1, v^1_1  \rangle = -&\int_{\Omega_1}A_1(\lambda_1) \boldsymbol\nabla    u_n^1  \boldsymbol\nabla v^1_1 -\int_{\Omega_1}A_2(\lambda_2) \boldsymbol\nabla    u_n^2  \boldsymbol\nabla v^1_1  +\int_{\Omega_1}    g(  u_n^1 ,u_n^2) v^1_1  +\\& \int_{\Gamma}    \alpha(I^1_n) u_n^1  v_1^1+\int_{\Omega_1} \lambda_2   u_n^2 v^1_1 -\int_{\Omega_1} \lambda_1   u_n^1  v^1_1 ,   \\
				\langle  \partial_{t} u_n^2, v_2  \rangle=\langle \partial_{t} u_n^2, v^1_2  \rangle =-&\int_{\Omega_2}A_2(\lambda_2) \boldsymbol\nabla    u_n^2  \boldsymbol\nabla v^1_2-\int_{\Omega_2}A_1(\lambda_1) \boldsymbol\nabla   v^1_2 \boldsymbol\nabla v^1_2 +\int_{\Omega_2}    g(  u_n^2 ,u_n^1) v^1_2 +\\& \int_{\Gamma}    \alpha(I^2_n) u_n^2  v_2^1+\int_{\Omega_2} \lambda_1  u_n^1 v^1_2-\int_{\Omega_2} \lambda_2   u_n^2 v^1_2, 
			\end{split}
		\end{equation} 
		Consequently
		\begin{equation*}
			\begin{split}
				\vert  \langle \partial_{t} u_n^1, v_1  \rangle\vert \leq C^1(u_n^{1,2}) \| v^1_1 \|_{H^1_{A_1,0}(\Omega_1)}    \\
				\vert  \langle \partial_{t} u_n^2, v_2  \rangle\vert \leq C^2(u_n^{1,2}) \| v^1_2 \|_{H^1_{A_2,0}(\Omega_2)}
			\end{split}
		\end{equation*}
		where $$C^1(u_n^{1,2})=\bigg( \Vert  u_n^1 \Vert_{\boldsymbol H^1_{A_1,0}(\Omega_1)}+ \Vert  u_n^2 \Vert_{\boldsymbol H^1_{A_1,0}(\Omega_1)}  + L_g \Vert  u_n^1 \Vert_{\boldsymbol L^2(\Omega_1)}+\Vert  u_n^2 \Vert_{\boldsymbol L^2(\Omega_1)} +\Vert  u_n^1 \Vert_{\boldsymbol L^2(\Omega_1)} \bigg)$$ and 
		$$C^2(u_n^{1,2})=\ \bigg(\Vert  u_n^2 \Vert_{\boldsymbol H^1_{A_2,0}(\Omega_2)}+ \Vert  u_n^1 \Vert_{\boldsymbol H^1_{A_2,0}(\Omega_2)} + L_g \Vert  u_n^2 \Vert_{\boldsymbol L^2(\Omega_2)}+\Vert  u_n^1 \Vert_{\boldsymbol L^2(\Omega_2)} +\Vert  u_n^2 \Vert_{\boldsymbol L^2(\Omega_2)}  \bigg),$$
		using the operator $\mathcal{I} $ and 
		since \( \| v^1_i \|_{H^1_{A_i,0}(\Omega_i)} \leq 1 \). Thus
		\[
		\| ( \partial_{t} u_n^1,\partial_{t} u_n^2) \|_{\boldsymbol H^{-1}_{A_1,0}(\Omega_1)\times \boldsymbol H^{-1}_{A_2,0}(\Omega_2)} \leq \hat C_2  \| (u_n^1,u_n^2) \|_{\boldsymbol H^1_{A_1,0}(\Omega_1)\times \boldsymbol H^1_{A_2,0}(\Omega_2)} ,
		\]
		and therefore
		\begin{equation}\label{ut_esti}
			\begin{split}
				\Vert ( \partial_{t} u_n^1,\partial_{t} u_n^2 )\Vert_{\boldsymbol L^2(0,T,\boldsymbol H^{-1}_{A_1,0}(\Omega_1))\times \boldsymbol L^2(0,T,\boldsymbol H^{-1}_{A_2,0}(\Omega_2))}=\int_0^T( \partial_{t} u_n^1,\partial_{t} u_n^2) \|^2_{\boldsymbol H^{-1}_{A_1,0}(\Omega_1)\times \boldsymbol H^{-1}_{A_2,0}(\Omega_2)}  dt\\ \leq \hat C_2 \int_0^T   \| (u_n^1,u_n^2) \|^2_{\boldsymbol H^1_{A_1,0}(\Omega_1)\times \boldsymbol H^1_{A_2,0}(\Omega_2)} dt \leq \hat C_2 \hat C_1
			\end{split}
		\end{equation}
		the inequality \eqref{ut_esti} implies that $(\partial_{t} u_n^1,\partial_{t} u_n^2)_n$ is bounded in $\boldsymbol L^2(0,T,\boldsymbol H^{-1}_{A_1,0}(\Omega_1))\times \boldsymbol L^2(0,T,\boldsymbol H^{-1}_{A_2,0}(\Omega_2)).$
		%%%

		%%%
		\textbf{Step 4: Convergence} \\
		There exists a subsequence, still denoted $(  u_{n}^1,  u_{n}^2)$, such that
		\begin{equation}
			u_{n}^i\rightharpoonup   u^i \;\;\text{weakly in}\; \boldsymbol L^2(0,T,\boldsymbol H^{1}_{A_i,0}(\Omega)),
		\end{equation}

		\begin{equation}
			\partial_{t}  u_{n}^i\rightharpoonup \partial_{t}  u^i \;\;\text{weakly in}\;\boldsymbol L^2(0,T,\boldsymbol H^{-1}_{A_i,0}(\Omega)),
		\end{equation}
		
		\textbf{Step 5: Passage to the limit} \\
		For all $i,j=1,2 $ and $i\neq j$ we have 
		\begin{equation*}
			\begin{split}
				\int_{0}^{T}\langle \partial_{t}  u_{n}^i,v^i  \rangle_{\Omega_i}+\int_{0}^{T}\langle A_i(\lambda_i)  \boldsymbol \nabla  u_{n}^i,\nabla v^i \rangle_{\Omega_i}+\int_{0}^{T}\langle A_j(\lambda_j)  \boldsymbol \nabla  u_{n}^j,\nabla v^i \rangle_{\Omega_i}= \int_{0}^{T}\langle g(u_{n}^i,u_{n}^j), v^i \rangle_{\Omega_i} +\\ 
                 \int_{0}^{T}\langle \alpha(I_{n}^i) u_{n}^i, v^i \rangle_{\Gamma} +\int_{0}^{T}  \langle \lambda_j u_n^j, v^i \rangle_{\Omega_i}-\int_0^{T}\langle \lambda_i  u_n^i,  v^i \rangle_{\Omega_i}.
			\end{split}
		\end{equation*}
		Since $g$ is globally Lipschitz continuous and $(  u_{n}^i)_{n}$ is bounded as above,
	By the Aubin-Lions lemma \cite{temam2024navier},
		$$g(  u_{n}^i,u_{n}^j)\rightarrow g(  u^i,u^j) \text{ strongly in }  \boldsymbol L^{2}\big(0,T,\boldsymbol L^{2}(\Omega_i)\big).$$
Similarly, this approach yields the following convergence result :
	$$\alpha (  I_{n}^i)\rightarrow \alpha (  I^i) \text{ strongly in }  \boldsymbol L^{2}\big(0,T,\boldsymbol L^{2}(\Gamma)\big), \quad \text{for } i=1,2.$$
		and passing to the limit when $n\rightarrow+\infty$ in the weak formulation, for  all $i,j=1,2 $ and $i\neq j$,  we obtain
		\begin{equation*}
			\begin{split}
				\int_{0}^{T}\langle \partial_{t}  u^i,v^i  \rangle_{\Omega_i}+\int_{0}^{T}\langle A_i(\lambda_i)  \boldsymbol \nabla  u^i,\nabla v^i\rangle_{\Omega_i}+\int_{0}^{T}\langle A_j(\lambda_j)  \boldsymbol \nabla  u^j,\nabla v^i\rangle_{\Omega_i}= \int_{0}^{T}\langle g(u^i,u^j), v^i\rangle_{\Omega_i} +\\   \langle \int_{0}^{T} \alpha(I^i) u^i, v^i \rangle_{\Gamma}+\int_{0}^{T}  \langle \lambda_j u^j ,v^i\rangle_{\Omega_i}-\int_ {0}^{T}\langle \lambda_i  u^i,  v^i\rangle_{\Omega_i}.
			\end{split}
		\end{equation*}
		for all $ v \in  \boldsymbol L^2(0,T,\boldsymbol H^{1}_{A_1,0}(\Omega))\times\boldsymbol L^2(0,T,\boldsymbol H^{1}_{A_2,0}(\Omega)) $.
		
		\textbf{Step 6: Uniqueness } \\
		Let us suppose that the problem  $\eqref{abrP}$ has two solutions $(u^1,u^2)$ and $(w^1,w^2)$. Subtracting the weak formulations $\eqref{FV}$ of the solutions yields 
		\begin{equation*}
			\begin{split}
				&\int_{\Omega_1} (\partial_{t}  u^1-\partial_{t}  w^1) v_1  +\int_{\Omega_1}A_1(\lambda_1)( \boldsymbol\nabla 
				u^1-\boldsymbol\nabla w^1)\nabla v_1 +\int_{\Omega_1}A_2(\lambda_2)( \boldsymbol\nabla u^2-\boldsymbol\nabla w^2)\nabla v_1+\\
				&\int_{\Omega_2} (\partial_{t}  u^2-\partial_{t}  w^2) v_2  +\int_{\Omega_2}A_1(\lambda_1)( \boldsymbol\nabla 
				u^1-\boldsymbol\nabla w^1)\nabla v_2 +\int_{\Omega_2}A_2(\lambda_2)( \boldsymbol\nabla u^2-\boldsymbol\nabla w^2)\nabla v_2  \\= &\int_{\Omega_1} \lambda_1  ( u^2-w^2)  v_1-\int_{\Omega_1} \lambda_1  ( u^1-w^1)  v_1+\int_{\Omega_2} \lambda_2  ( u^1-w^1)v_2-\int_{\Omega_2} \lambda_2  ( u^2-w^2) v_2,\\
				&+ \int_{\Omega_1}v_1\left( g(u^1,u^2)-g(w^1,w^2)\right) + \int_{\Omega_2}v_2\left( g(u^2,u^1)-g(w^2,w^1)\right) +  \sum_{i=1}^2\int_\Gamma \alpha(I_{u}^i) u^i-\alpha(I_{w}^i) w^i, v^i \rangle_{\Gamma}
			\end{split}
		\end{equation*}
		by taking $v_i = u^i- w^i, i=1,2$ as a test function gives
		\begin{equation*}
			\begin{split}
				&\int_{\Omega_1}\partial_{t}v_1 v_1+\int_{\Omega_1}A_1(\lambda_1) \boldsymbol \nabla v_1  \boldsymbol\nabla v_1 +\int_{\Omega_1}A_2(\lambda_2) \boldsymbol \nabla v_1  \boldsymbol\nabla v_2+
				\int_{\Omega_1}\partial_{t}v_2 v_2+\int_{\Omega_2}A_2(\lambda_2) \boldsymbol \nabla v_2  \boldsymbol\nabla v_2 \\&+\int_{\Omega_2}A_1(\lambda_1) \boldsymbol \nabla v_1  \boldsymbol\nabla v_2
				=\int_{\Omega_1} \lambda_1  v_2  v_1-\int_{\Omega_1} \lambda_1   v_1  v_1+\int_{\Omega_2} \lambda_2   v_1v_2-\int_{\Omega_2} \lambda_2   v_2 v_2,\\
				&+ \int_{\Omega_1}v_1\left( g(u^1,u^2)-g(w^1,w^2)\right) + \int_{\Omega_2}v_2\left( g(u^2,u^1)-g(w^2,w^1)\right) +  \sum_{i=1}^2\int_\Gamma \alpha(I_{u}^i) u^i-\alpha(I_{w}^i) w^i, v^i \rangle_{\Gamma}
			\end{split}
		\end{equation*}
		using the same technique the estimation of the approach solution sequence $u_n$  we obtain  the following estimation  similar to the estimation \eqref{eqb1}
		\begin{equation}\label{eqbphi1}
			\begin{small}
				\frac{1}{2} \frac{d }{d t}\Vert ( v_1 (.,s),v_2(.,s)) \Vert^2_{\boldsymbol L^2(\Omega_1)\times \boldsymbol L^2(\Omega_2)}+\Vert ( v_1,v_2) \Vert^2_{\boldsymbol H^1_{A_1,0}(\Omega_1)\times\boldsymbol  H^1_{A_2,0}(\Omega_2) } \leq  M_{v}  \Vert (v_1 ,v_2)  \Vert^2_{\boldsymbol L^2(\Omega_1)\times \boldsymbol L^2(\Omega_2) }
		\end{small}
		\end{equation}
		Applying the  Gronwall's lemma  gives
		$$v_i = 0 \; \text{ a. e. in } Q_i.$$	
	\end{proof}

	before proving the theorem \eqref{thoe1}, we know that 
	the reaction function $f:\boldsymbol H^1_{A_1}(\Omega)\times \boldsymbol H^1_{A_2}(\Omega)\rightarrow \boldsymbol L^{2}(\Omega)\times \boldsymbol L^{2}(\Omega)$ is locally Lipschitz, we use a standard truncation of the source as follows, for all $u=(u^1,u^2)$:
	\begin{equation*}
		\ds \mathcal T r( u)=\left\{
		\begin{array}{ll}
			f(  u),\;\;\mbox{if}\;\;\; \|  u\|_{\boldsymbol H^1_{A_1}(\Omega)\times \boldsymbol H^1_{A_2}(\Omega)}\leq \delta&  \\[2ex]
			f\bigg(\frac{\delta   u}{\|  u\|_{\boldsymbol H^1_{A_1}(\Omega)\times \boldsymbol H^1_{A_2}(\Omega)}}\bigg),\;\;\mbox{if}\;\;\; \|  u\|_{\boldsymbol H^1_{A_1}(\Omega)\times \boldsymbol H^1_{A_2}(\Omega)}> \delta &
		\end{array}
		\right.
	\end{equation*}
	where $\delta$ is a positive constant. it is easy to see that $\mathcal T r $ is globally Lipschitz with a Lipschitz constant $L_{\mathcal T r}>0$ and  it satisfies $\mathcal T r (0,0)=0$.
	\begin{proof}[Proof of the theorem \eqref{thoe1}] The function $\mathcal T r$ verifies the same hypotheses of the function $g$ define in the proposition \eqref{prop3}, which means that the following truncated problem has a unique solution $u_{\delta}=(u_{\delta}^1,u_{\delta}^2)$  
		\begin{equation}\label{abrP1}
			\left\{
			\begin{array}{ll}
				\partial_{t}u_{\delta}^i-\mathrm{div}(A_i(\lambda_i)\boldsymbol\nabla u^i_{\delta}) =\mathcal T r(u^i_{\delta},u^j_{\delta})+\mathrm{div}(A_j(\lambda_j)\boldsymbol\nabla u^j_{\delta})+\lambda_j   u^j_{\delta} - \lambda_i   u^i_{\delta},& \text{in } [0,T]\times \Omega_i\\\\
				A_i(\lambda_i)\boldsymbol\nabla u^i_{\delta} \cdot   \tau+A_j(\lambda_j)\boldsymbol\nabla u^j_{\delta} \cdot   \tau+\alpha^j(I^i) u^j_{\delta}=\boldsymbol 0,&\mbox{on}\;\;[0,T]\times\Gamma\\\\
				u^i_{\delta}=\boldsymbol 0&\mbox{on}\;\;[0,T]\times\partial\Omega_i\\\\
				u^i_{\delta}(0,x)=u^i_0(x,\lambda_i,\lambda_j)&  \text{in } \Omega_i\\[2ex]  \text{ for  all } i,j=1,2 \text{ and } i\neq j.
			\end{array}
			\right.
		\end{equation}
		According to the Proposition \eqref{prop3},    $(u_{\delta}^1,u_{\delta}^2)$ satisfies the following energy inequality:
		\begin{equation}\label{esti12}
			\begin{split}
				\Vert  (u_{\delta}^1,u_{\delta}^2) \Vert^2_{\boldsymbol L^2(0,T,\boldsymbol H^1_{A_1,0}(\Omega_1))\times L^2(0,T,\boldsymbol  H^1_{A_2,0}(\Omega_2))} \leq \bar{C}_{\delta}\|u^1_{\delta}(.,0),u^2_{\delta}(.,0)\|^2_{\boldsymbol L^2(\Omega_1)\times \boldsymbol L^2(\Omega_2)}
			\end{split}
		\end{equation}
		Choosing $\bar{C}_{\delta}\|u^1_{\delta}(.,0),u^2_{\delta}(.,0)\|^2_{\boldsymbol L^2(\Omega_1)\times \boldsymbol L^2(\Omega_2)}\leq \delta$ leads to prove $\mathcal T r(u)=f(u)$ given in the initial problem.\\
		Let's now prove the positivity of the solution of our problem, by rewriting  $u^i_\delta$  as follow $u^i_\delta=(\xi^i)^{+}-(\xi^i)^{-}$ where $(\xi^i)^{+}, (\xi^i)^{-}\geq 0$, they are defined by
		$$(\xi^i)^{+}:=\max(0,u^i_\delta) \text{ and }(\xi^i)^{-}:=\max(0,-u^i_\delta)$$ 
		By using the  estimation of solution proven in \eqref{esti12}, we get 
		\begin{equation*}
			\Vert  (\xi^1)^{-}(.,t) \Vert^2_{L^2(\Omega_1)} +  \Vert   (\xi^2)^{-}(.,t) \Vert^2_{L^2(\Omega_2)} \leq \left( \Vert  (\xi^1)^{-}(.,0) \Vert^2_{L^2(\Omega_1)} +  \Vert  (\xi^2)^{-} (.,	0) \Vert^2_{L^2(\Omega_2)}\right)\bar{C}_{\delta}.
		\end{equation*}
		Since $u^i_0> 0$ which means that $(\xi^i)^{-}(.,0):=(\xi^i_0)^{-}=0$, last estimation yields to $(\xi^i)^{-}(.,t)=0$ then the positivity of solution is proven.
	\end{proof}

	\section{Finite Volume Method (FVM)} \label{section4}
	
	To approximate the solution of problem \eqref{Pb_main1} by means of the Finite Volume Method (FVM), we discretize both the spatial domain and the temporal interval. The computational domain \(\Omega\) is partitioned into \(N_x\) control volumes (cells) of uniform size \(\Delta x\). The center of the \(i\)-th control volume is denoted by \(x_i\), while its left and right interfaces are represented by \(x_{i-\tfrac{1}{2}}\) and \(x_{i+\tfrac{1}{2}}\), respectively.

	We integrate our problem \eqref{Pb_main1} over a control volume \([x_{i-\tfrac{1}{2}}, x_{i+\tfrac{1}{2}}]\) to obtain  the following finite volume formulation:
	\begin{equation}\label{fvm1}
		\begin{split}
			\int_{x_{i-\tfrac{1}{2}}}^{x_{i+\tfrac{1}{2}}} \partial_t u^i \, dx = \int_{x_{i-\tfrac{1}{2}}}^{x_{i+\tfrac{1}{2}}} \mathrm{div}(A_i(\lambda_i) \boldsymbol\nabla u^i) \, dx + \int_{x_{i-\tfrac{1}{2}}}^{x_{i+\tfrac{1}{2}}} f(u^i, u^j) \, dx \\+ \int_{x_{i-\tfrac{1}{2}}}^{x_{i+\tfrac{1}{2}}} \mathrm{div}(A_j(\lambda_j) \boldsymbol\nabla u^j) \, dx + \int_{x_{i-\tfrac{1}{2}}}^{x_{i+\tfrac{1}{2}}} \lambda_j u^j \, dx - \int_{x_{i-\tfrac{1}{2}}}^{x_{i+\tfrac{1}{2}}} \lambda_i u^i \, dx
		\end{split}
	\end{equation}
	The right-hand side can be simplified using the divergence theorem:
	\[
	\int_{x_{i-\tfrac{1}{2}}}^{x_{i+\tfrac{1}{2}}} \mathrm{div}(A_i(\lambda_i) \boldsymbol\nabla u^i) \, dx = \left[ A_i(\lambda_i) \boldsymbol\nabla u^i \cdot \tau \right]_{x_{i-\tfrac{1}{2}}}^{x_{i+\tfrac{1}{2}}}
	\]
	Let us define the discrete solution \(u^i\) with the following approximated formula:
	\begin{equation}
		U^{i,n}_j \approx u^i(x_j, t_n), \quad \text{for } j = 1, \dots, N_x.
	\end{equation}
	
	In the Finite Volume Method, the fluxes \(\Psi_{i+\tfrac{1}{2}}\) represent the transfer of the conserved quantity across the interfaces of the control volumes. These fluxes play a central role in ensuring local conservation and are typically approximated using suitable numerical schemes. In the case of a degenerate diffusion coefficient \(A_i(\lambda_i)\), the flux at the boundaries of each control volume can be approximated as:

	\[
	\Psi_{i+\tfrac{1}{2}} = A_i(\lambda_i) \frac{u_{i+1} - u_i}{\Delta x}
	\]
	This approximation is based on the first-order finite difference approximation of the gradient of \(u\) across the cell interface.\\ The fluxes \(\Psi_{i+\tfrac{1}{2}}\) are  used to rewrite the problem \eqref{fvm1} as the discrete system of equations in matrix form:
	\begin{equation}
		\frac{d U^{i,n+1}}{dt} + K_i U^{i,n+1} = F_i(U^{i,n+1}, U^{j,n+1}),
		\label{eq:matrix_fvm}
	\end{equation}
	where:
	\begin{itemize}
		\item \(K\) is the matrix, assembled from numerical flux approximations:
		\begin{equation*}
			K_{i,i-1} = -\frac{A_i(\lambda_i)}{(\Delta x)^2}, \quad K_{i,i} = \frac{A_i(\lambda_i) + A_i(\lambda_i)}{(\Delta x)^2}, \quad K_{i,i+1} = -\frac{A_i(\lambda_i)}{(\Delta x)^2}.
			\label{eq:stiffness_matrix}
		\end{equation*}
		\item \(F_i(U^{i,n+1}, U^{j,n+1})\) contains nonlinear terms and coupling effects.
	\end{itemize}
	To advance the solution in time, we use the implicit Euler method:
	\begin{equation}
		\frac{U^{i,n+1} - U^{i,n}}{\Delta t} + K_i U^{i,n+1} = F_i(U^{i,n+1}, U^{j,n+1}),
		\label{eq:time_step_fvm}
	\end{equation}
	where \(\Delta t\) is the time step size. 
	
	The coupled system is solved iteratively using the following steps:
	\begin{enumerate}
		\item \textbf{Solve for \(U^{1,n+1}\):}
		\begin{equation}
			\frac{U^{1,n+1} - U^{1,n}}{\Delta t} + K_1 U^{1,n+1} = F_1(U^{1,n}, U^{2,n}).
			\label{eq:solve_u1_fvm}
		\end{equation}
		\item \textbf{Use the solution of \(U^{1,n+1}\) to solve for \(U^{2,n+1}\):}
		\begin{equation}
			\frac{U^{2,n+1} - U^{2,n}}{\Delta t} + K_2 U^{2,n+1} = F_2(U^{2,n}, U^{1,n+1}).
			\label{eq:solve_u2_fvm}
		\end{equation}
	\end{enumerate}
	
	These steps ensure that the solution at each time step remains consistent with the governing equations and coupling conditions.
	
	\section{Numerical Results}\label{section5}
	
	This section presents the numerical results of the model in a one-dimensional (1D) spatial domain. The domain is divided into two subdomains: $\Omega_1 = ]0, 1]$ and $\Omega_2 = [1, 2[$. Each subdomain is discretized using $N_x = 302$ spatial points. The simulations are conducted over a time interval of $T = 300$ days, with a time step size of $\Delta t = 0.0125$ days. They are performed using the specified initial conditions and parameter values detailed below.

    The initial population in each subdomain is set as follows:
\begin{itemize}
    \item \textbf{In the subdomain $\Omega_1 =  \, ]0, 1]$:} The susceptible population ($S_1$) is uniformly distributed with an initial normalized value of $0.8$ ($240$ individuals), the infected population ($I_1$) is uniformly distributed with an initial normalized value of $0.2$ ($60$ individuals), and the recovered population ($R_1$) is initially zero.

    \item \textbf{In the subdomain $\Omega_2 = [1, 2[$:} The susceptible population ($S_2$) is uniformly distributed with an initial normalized value of $1.0$ ($300$ individuals), while both the infected population ($I_2$) and the recovered population ($R_2$) are initially zero.
\end{itemize}

	The parameter\(\lambda_j\) denotes the probability that an individual originating from subdomain \(\Omega_j\) resides in subdomain \(\Omega_i\), thereby accounting for immigration before disease detection. To model this probability, we employ, for instance, a spatial Poisson process, as described in \cite{Privault2013}. This choice is motivated by the need to capture both the spatial variability of population's movements and the inherent randomness of individual migration patterns. A spatial Poisson process provides a flexible framework for representing the distribution of individuals across the domain, thereby allowing the incorporation of spatial heterogeneity into the infection dynamics. Through this approach, we can simulate how the probability of an individual being located in a given subdomain varies spatially, thus reflecting more realistic scenarios where population movement is inherently non-uniform.

	For our simulations, we consider a simple yet illustrative example where
	% $\lambda_i$ is set to zero at the boundaries ($x = 0$ and $x = 2$), and a constant value $\lambda$ elsewhere in the domain. 
	we define the probability function $\lambda_i(x)$ as:
	\[
	\lambda_1(x) = 
	\begin{cases}
		0, & \text{if } x = 0, \\
		\lambda, & \text{otherwise}.
	\end{cases}
	\quad
	\text{ and }\quad\lambda_2(x) = 
	\begin{cases}
		%0, & \text{if } x = 0 \text{ for } i = 1, \\
		0, & \text{if } x = 2 \\
		\lambda, & \text{otherwise}.
	\end{cases}
	\]

	This configuration enables us to investigate the impact of uniform immigration probabilities in the interior regions while taking into account potential barriers or restrictions at the domain edges.

	The key parameters used in the simulations are summarized in Table~\ref{tab:parameters}. These parameters are critical in determining the dynamics of the model and are chosen based on realistic epidemiological scenarios.
	
	\begin{table}[h!]
		\centering
		\caption{Model Parameters}
		\begin{tabular}{|l|c|c|c|c|c|c|c|c|c|c|c|}
			\hline
			\textbf{Parameter} & $\beta_1$ & $\beta_2$ & $\beta_{ij}$ & $\gamma_1$ & $\gamma_2$ & $\Lambda_1$ & $\Lambda_2$ & $\mu_S$ & $\mu_I$ & $\mu_R$& $\lambda$ \\ \hline
			\textbf{Value (day$^{-1}$)} & 0.05 & 0.05 & 0.1 & 0.2 & 0.2 & 0.005 & 0.005 & 0.05 & 0.13 & 0.05&0.01 \\ \hline
		\end{tabular}
		\label{tab:parameters}
		
	\end{table}
	
	The diffusion coefficient function \(\sigma(y,t,\lambda)\) is a critical component in modelling the spread of the disease. It describes how the disease spreads through the spatial domain over time. The choice of \(\sigma(y,t,\lambda)\) can significantly influence the model's behavior. Here, we present the chosen form of the diffusion coefficient function
	\begin{equation}
		\sigma(y,t,\lambda) = \lambda (2 - y) y \cdot e^{-a \cdot (t - t_a)}
	\end{equation}
	where $a$ and $t_a$ are given parameters
	\begin{figure}[H]
		\centering
		\includegraphics[width=0.8\textwidth]{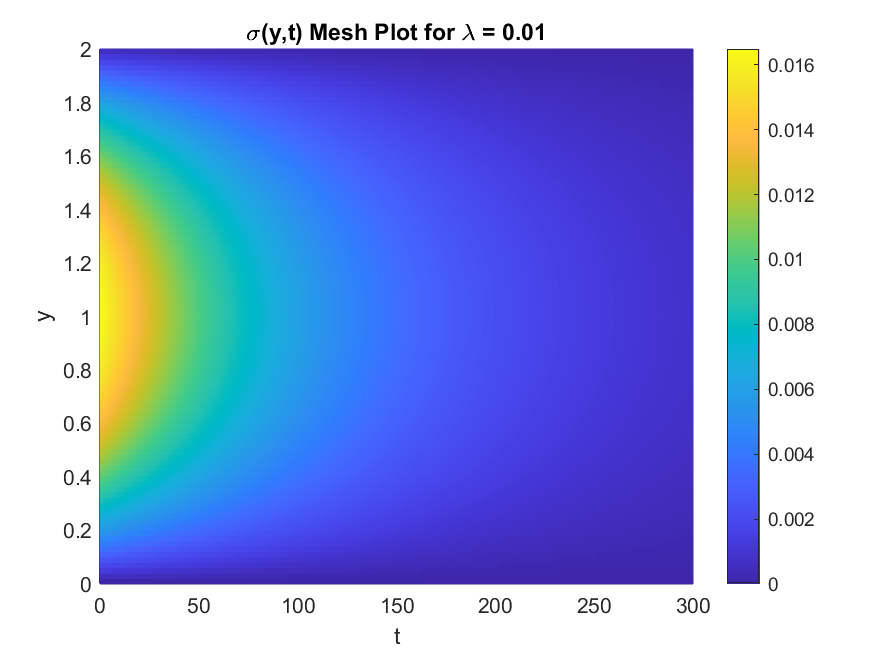}
		\caption{Diffusion coefficient function \(\sigma(y,t)\) for \(\lambda = 0.01, a=0.01\) and $t_a=50$. }
		\label{fig:sigma_lambda_0.01}
	\end{figure}
	This form ensures that the diffusion coefficient decreases exponentially over time, as shown in the figure \ref{fig:sigma_lambda_0.01} starting from a peak at \( t = t_a \) days. The term \(e^{-a \cdot (t - t_a)}\) captures this temporal decay. The spatial variation is governed by the term \((2 - y) y\), which is a quadratic function of \( y \). This quadratic term ensures that the diffusion coefficient is highest at \( y = 1 \) and decreases as \( y \) approaches 0 or 2. This choice is particularly useful for modelling scenarios where the diffusion rate decreases over time and has a peak at a specific point in the spatial domain, symmetrically decreasing as you move away from that point.

	 The figures illustrate the SIR dynamics for regions  $\Omega_1$ and Region $\Omega_2$ over time, showing both the general temporal trends and the evolution at fixed spatial positions within each domain. This dual approach highlights how the combined susceptible, infected, and recovered populations change over time at specific locations. The results are further supported by heatmaps (Figures (\ref{fig:S_heatmap}),(\ref{fig:I_heatmap}) and (\ref{fig:R_heatmap})), which provide a comprehensive spatial and temporal visualization of the solution for each compartment, clearly showing the distribution and evolution of susceptible, infected, and recovered populations across the entire domain.
		Figure (\ref{figN}) shows the total population's evolution over time for both regions, highlighting an initial increase due to births and immigration, followed by stabilization as these factors balance with migration and deaths, reaching equilibrium.

	\begin{figure}[H]
		\centering
		\begin{subfigure}{0.45\textwidth}
			\includegraphics[width=\linewidth]{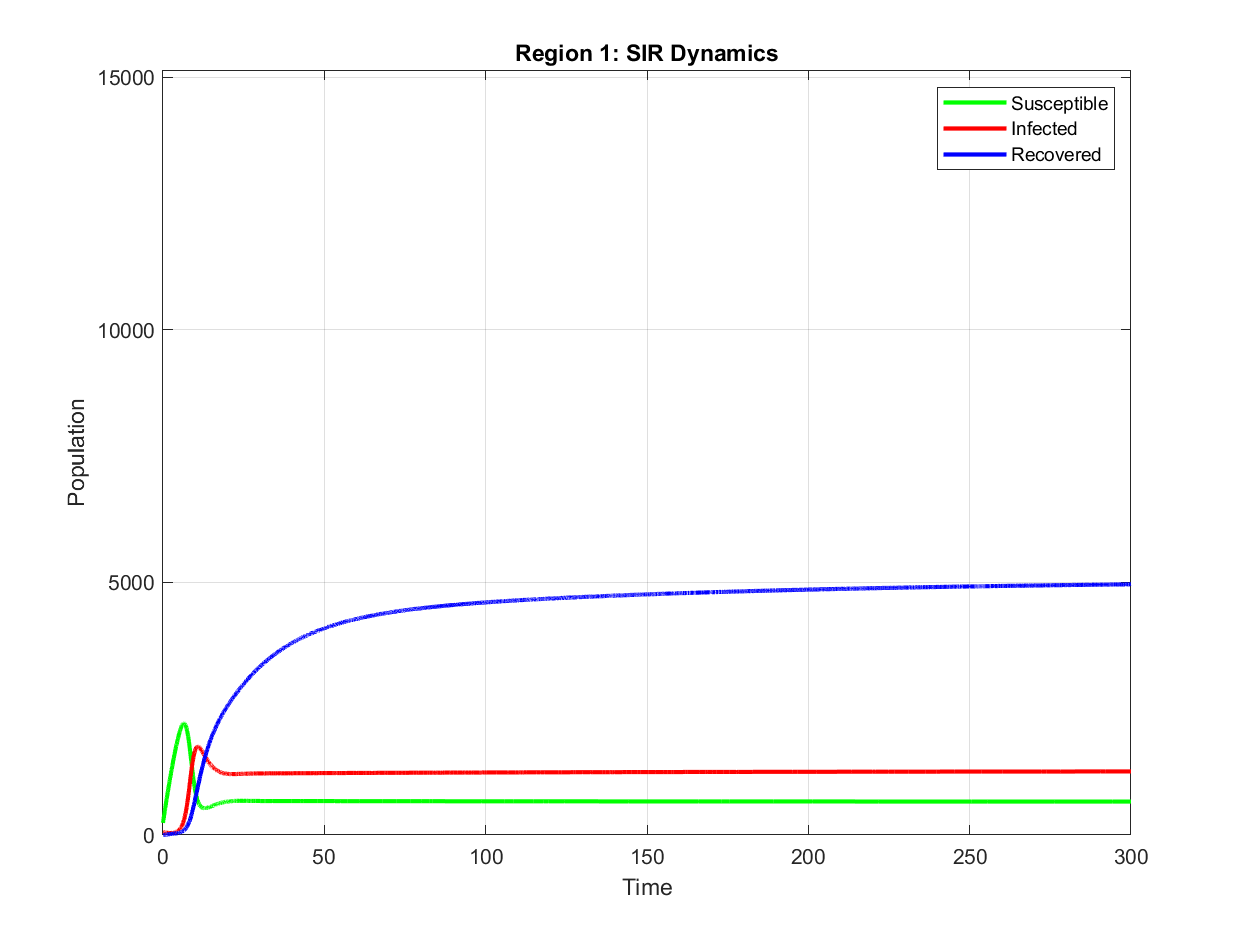}
			\caption{SIR in $\Omega_1$}
		\end{subfigure}
		\hfill
		\begin{subfigure}{0.45\textwidth}
			\includegraphics[width=\linewidth]{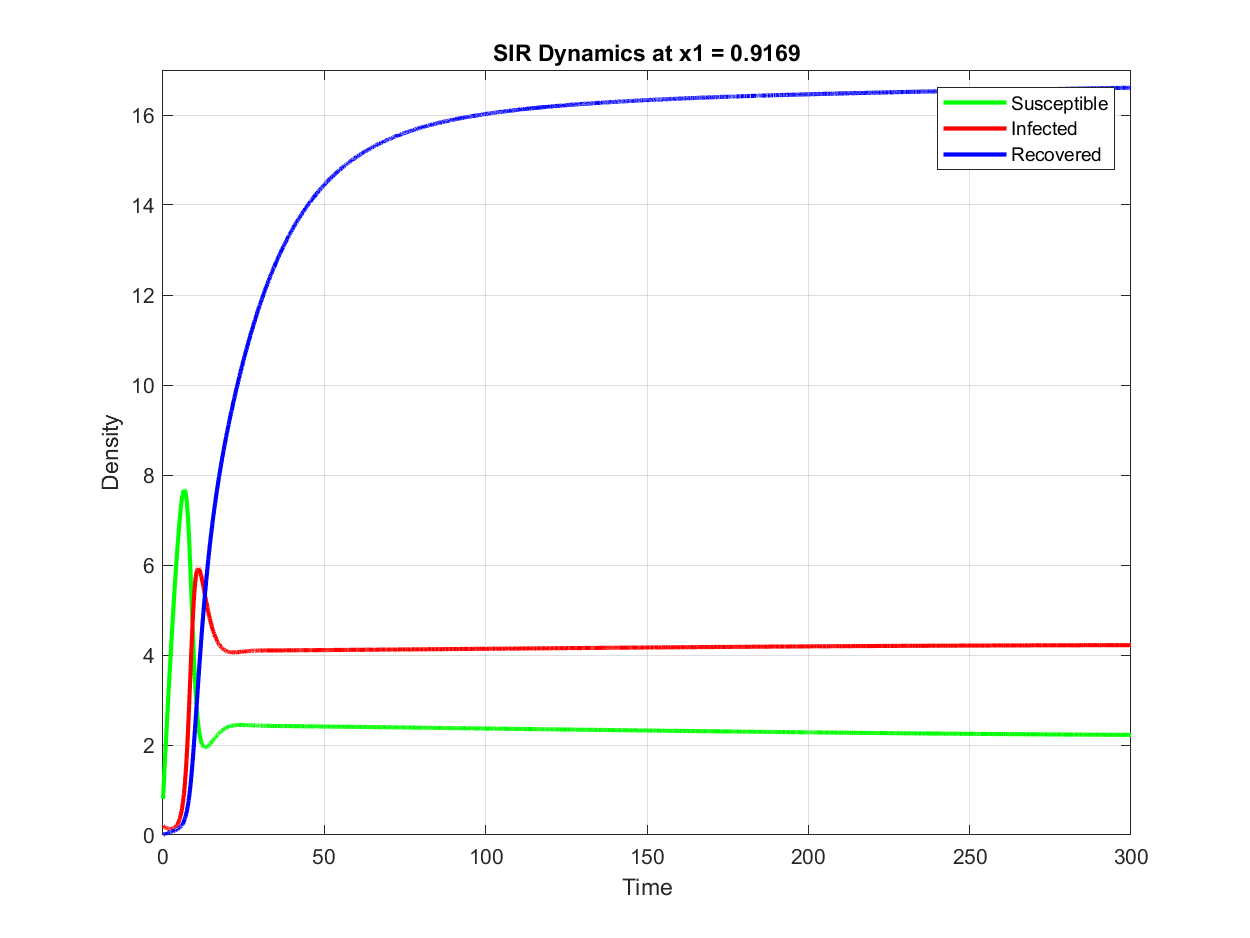}
			\caption{SIR evolution in a fixed point in $\Omega_1$}
		\end{subfigure}
		\hfill
		
		\begin{subfigure}{0.45\textwidth}
			\includegraphics[width=\linewidth]{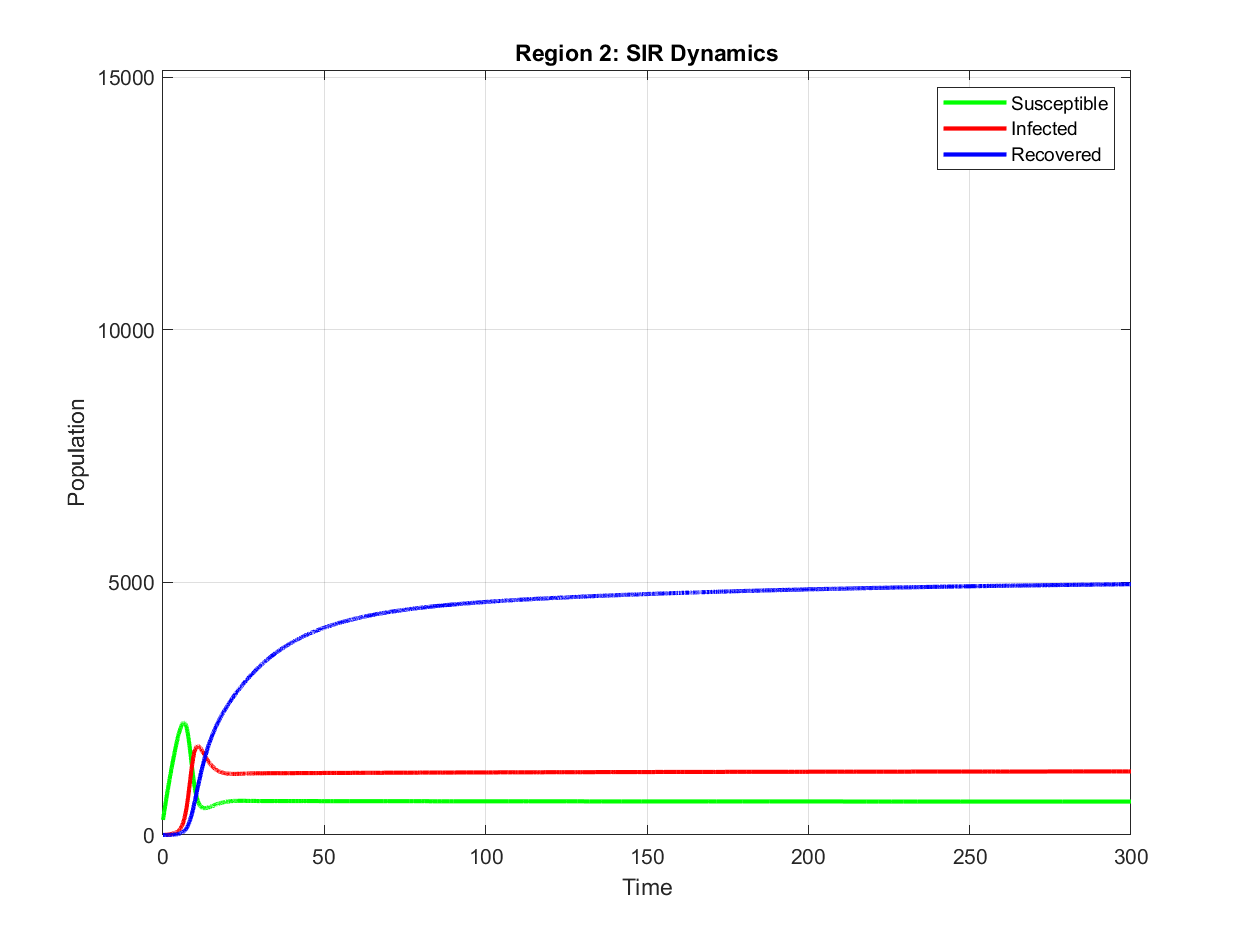}
			\caption{SIR in $\Omega_2$}
		\end{subfigure}
		\hfill
		\begin{subfigure}{0.45\textwidth}
			\includegraphics[width=\linewidth]{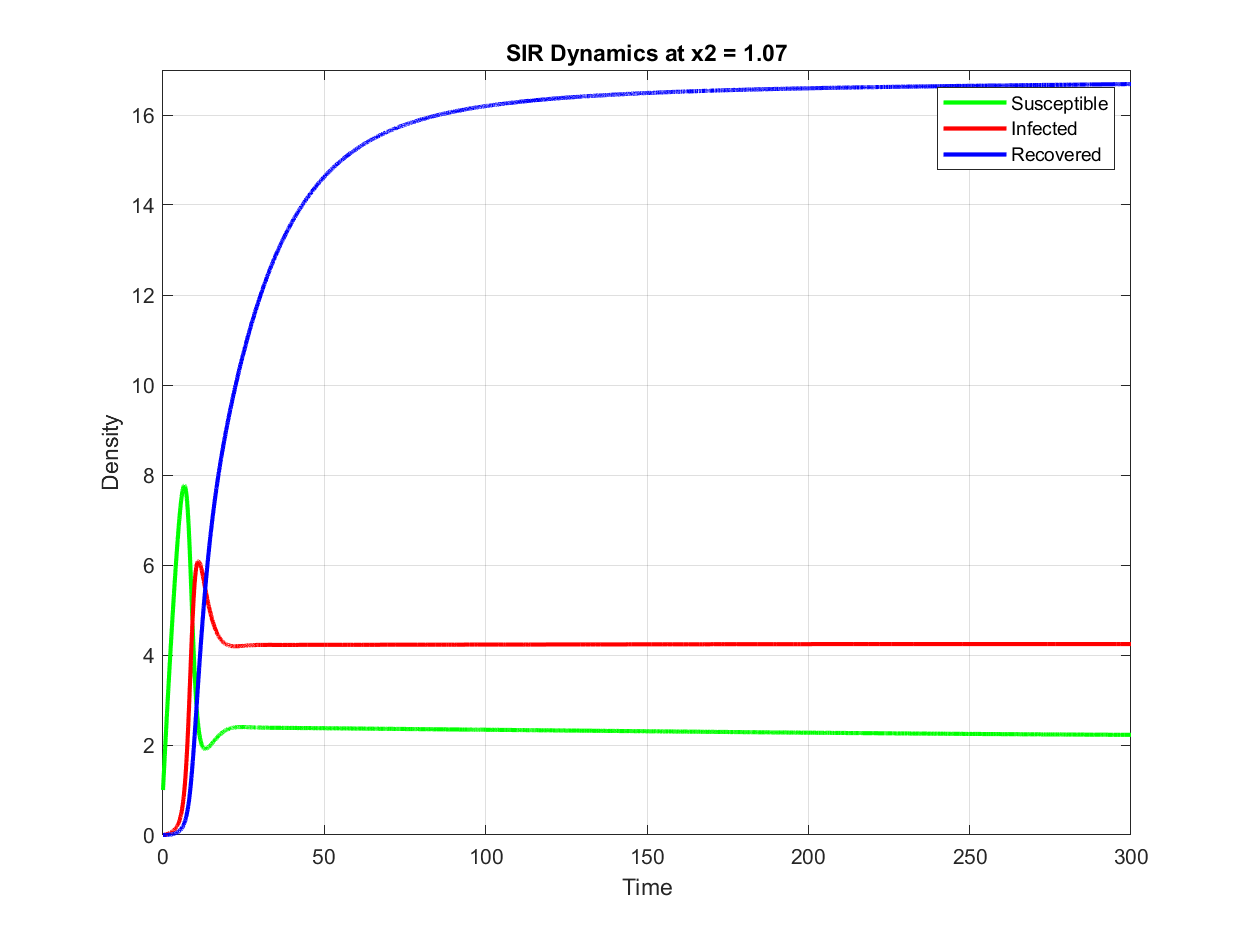}
			\caption{SIR evolution in a fixed point in $\Omega_2$}
		\end{subfigure}
		\caption{SIR model in $\Omega_1$ and $\Omega_2$ }
		\label{fig1}
	\end{figure}
		The table \ref{tab2} demonstrates how varying the parameter $\lambda$  influences key epidemic metrics across different regions. As $\lambda$ increases, the peak number of infected individuals decreases significantly, from 3673 at $\lambda=10^{-5}$
		to 2012 at $\lambda=1$. Similarly, both the total recovered individuals and total population size decrease with higher $\lambda$ values, indicating a more dispersed disease impact. Importantly, lockdown requirements increase dramatically with higher $\lambda$, with lockdown duration extending from approximately 15 days at $\lambda=10^{-5}$ to nearly 260 days at $\lambda=1$. These findings highlight the complex relationship between population movement and disease control in multi-region epidemic models.\\
		Complementing these observations, the contour plots in Figure \ref{fig5} depict the trade-off between epidemic severity and lockdown duration. Moderate values of the mobility parameter strike a desirable balance-substantially reducing the infection peak while maintaining relatively short lockdown periods. In contrast, low mobility results in sharper epidemic peaks but shorter lockdowns, whereas high mobility flattens the curve at the cost of extended restrictions. These results emphasise the critical role of optimizing inter-regional movement to manage both public health and socio-economic impacts effectively.
	
	\begin{figure}[H]
		%\centering
		\begin{subfigure}{0.45\textwidth}
			\includegraphics[width=\linewidth]{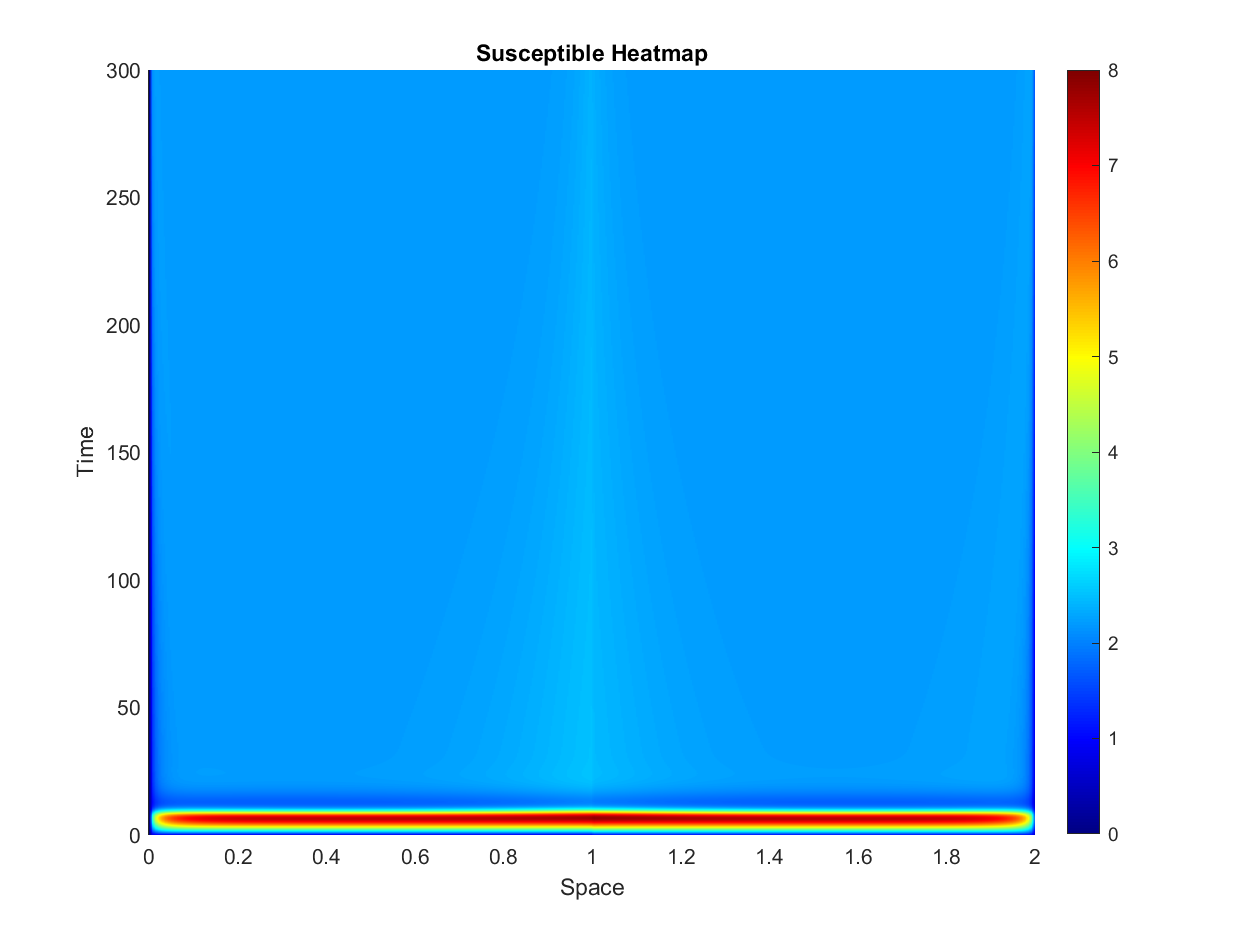}
			\caption{Susceptible Heatmap}
			\label{fig:S_heatmap}
		\end{subfigure}
		\hfill
		\begin{subfigure}{0.45\textwidth}
			\includegraphics[width=\linewidth]{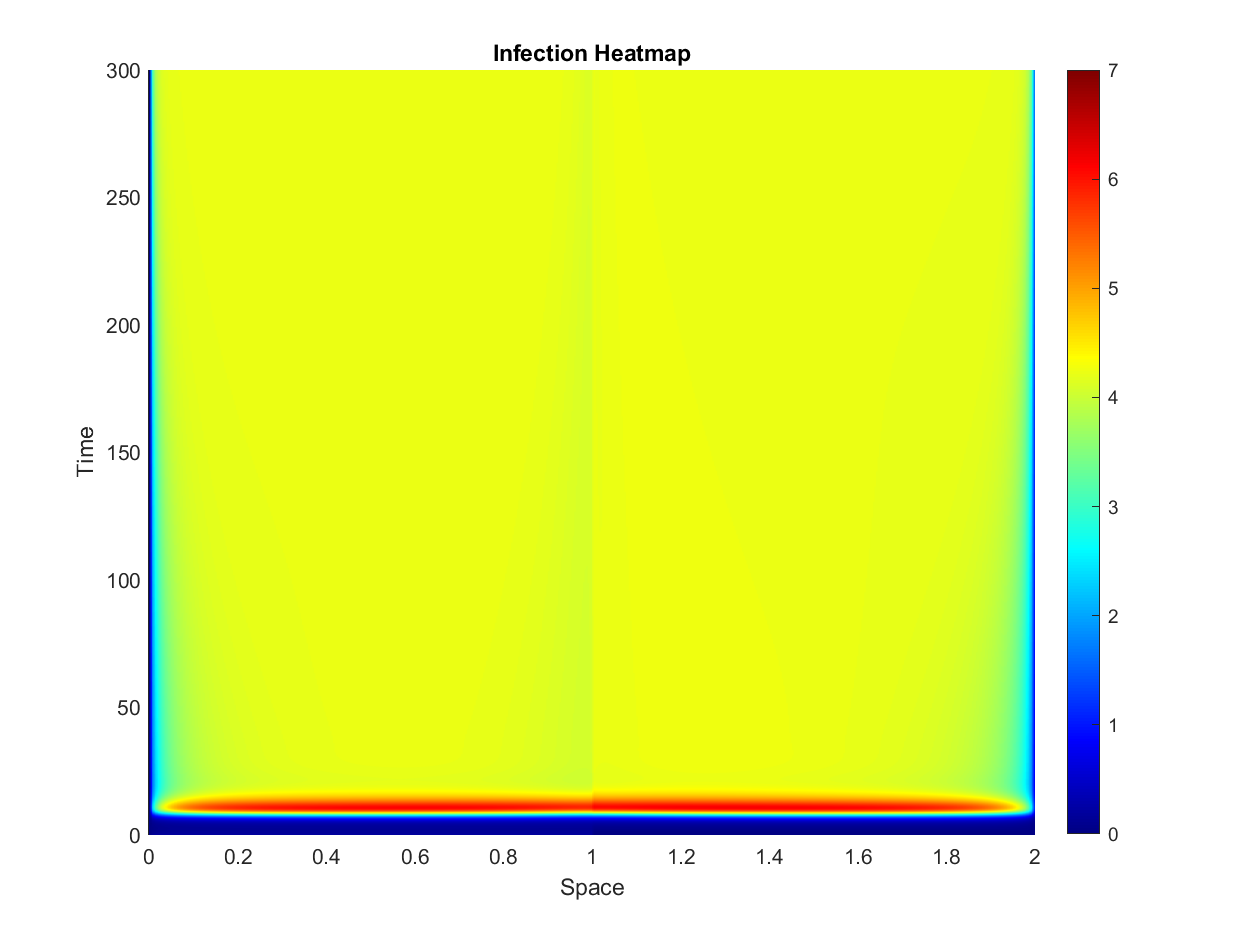}
			\caption{Infection Heatmap}
			\label{fig:I_heatmap}
		\end{subfigure}
		\hfill
		\begin{subfigure}{0.45\textwidth}
			\includegraphics[width=\linewidth]{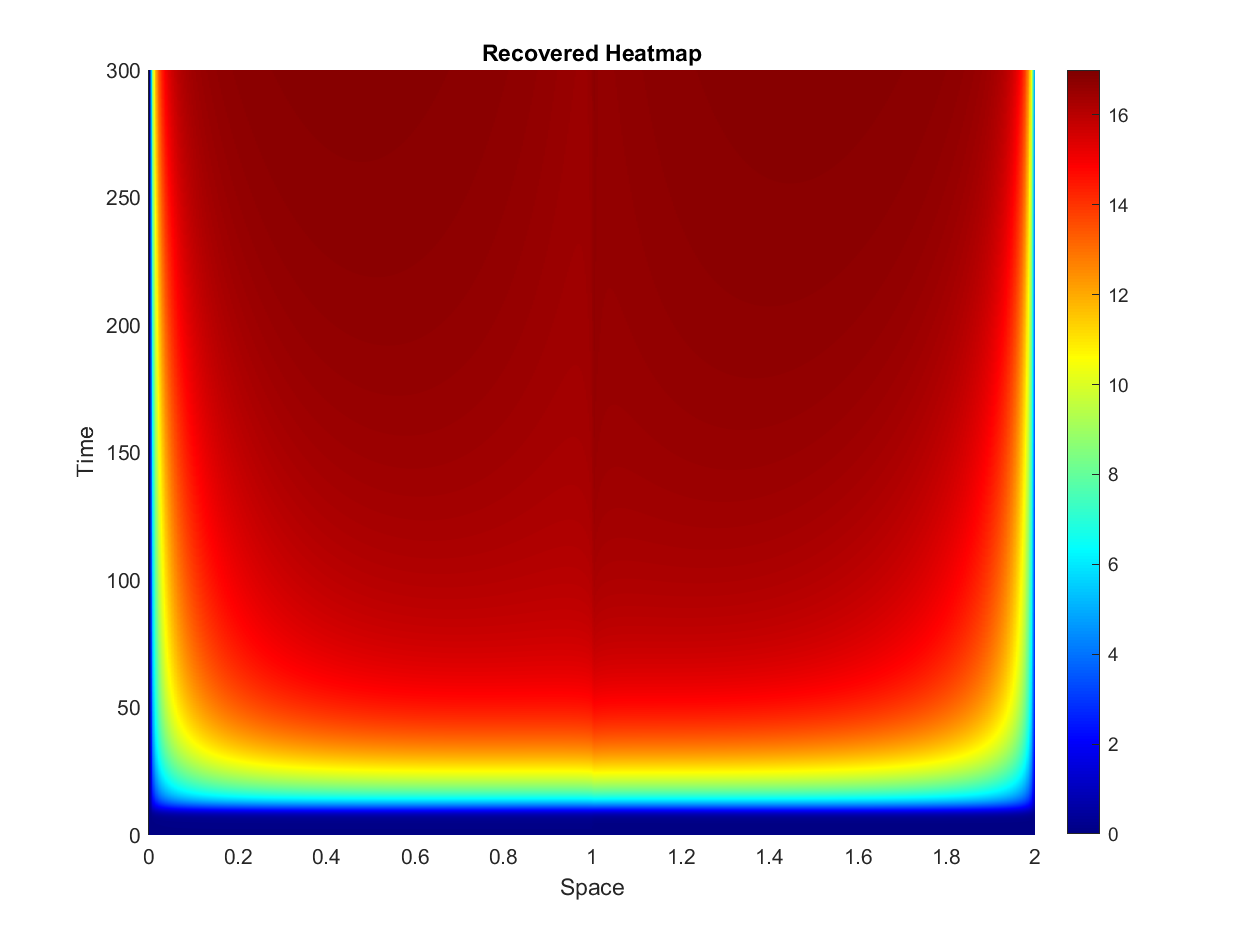}
			\caption{Recovered Heatmap}
			\label{fig:R_heatmap}
		\end{subfigure}
		\hfill
		\begin{subfigure}{0.45\textwidth}
			\includegraphics[width=\linewidth]{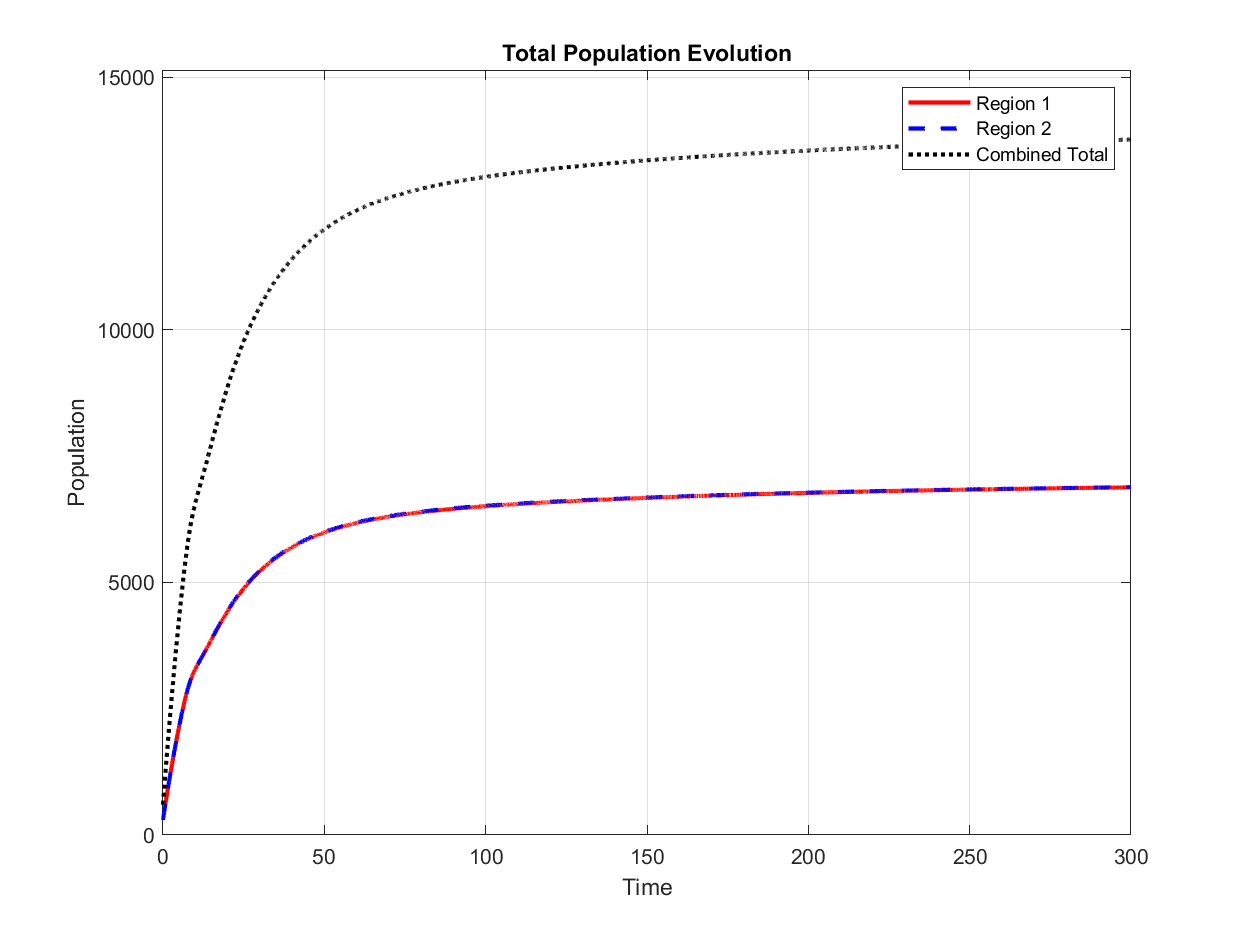}
			\caption{ Total population}
			\label{figN}
		\end{subfigure}
		\label{fig:heatmaps}
		\caption{Heatmaps showing the spatial and temporal evolution of the SIR model, with a figure of the evolution of the total population in each subdomain}
	\end{figure}
	\begin{table}[h]
		\centering
		\caption{Impact of Different $\lambda$ values on proposed  epidemic model  }
		\label{tab2}
		\begin{tabular}{ccccccccc}
			\hline
			\hline
			%\vspace*{10px}
			$\lambda=\lambda_1=\lambda_2$  & $10^{-5}$ & $10^{-4}$  & $10^{-3}$  & $10^{-2}$  & 0.1 & 0.2 & 0.5 & 1 \\
			\hline
			\hline
			\textbf{Total population} & 13979 & 13975 & 13947 & 13788 & 12684 &11709 &9623 & 7548 \\
			\hline
			\textbf{Total recovered} & 10125 & 10121 & 10095 & 9942 & 8880 &7944 &5962& 4037 \\
			\hline
			\textbf{Peak infected}  & 3673 & 3669 & 3645 & 3512 & 2577&2406 & 2251 & 2012 \\
			\hline
			%\hline
			\textbf{Rest infected ($\%$)}  & 18.10 & 18.11 & 18.13 & 18.27 & 19.40&20.55 & 23.39 & 26.65 \\
			\hline
			%\hline
			\textbf{Lockdown (days)}  & 15.61 & 15.62 & 15.74 & 17.34  &69.27&142.11 & 183.8 & 259.71 \\
			\hline
			%\hline
			\textbf{Lockdown ($\%$)}  & 5.20 & 5.21 & 5.24 & 5.78 & 23.09 &47.37& 61.26 & 86.57 \\
			\hline
		\end{tabular}
	\end{table}
	%\section{Varying $\lambda_i$ different} 
	\begin{figure}[H]
		\centering
		\begin{subfigure}{0.47\textwidth}
			\includegraphics[width=\linewidth]{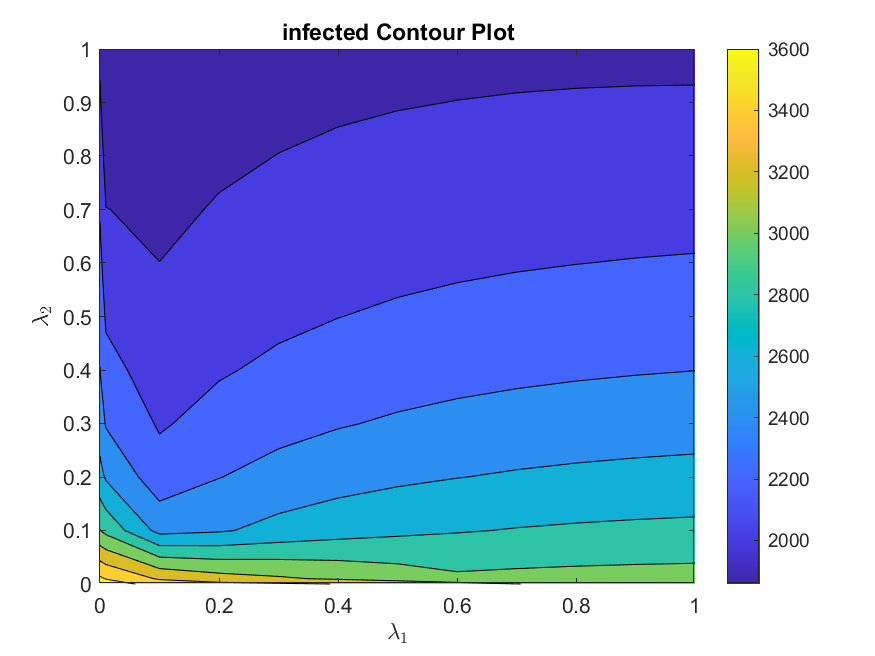}
			\caption{impact of $\lambda_{1,2}$ on the peak infected}
		\end{subfigure}
		\hfill
		\begin{subfigure}{0.47\textwidth}
			\includegraphics[width=\linewidth]{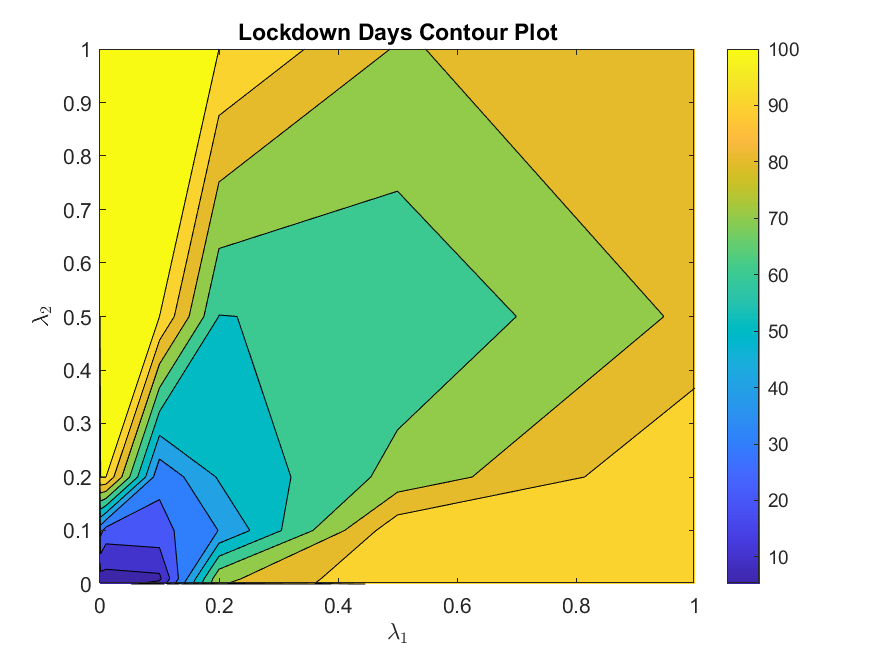}
			\caption{impact of $\lambda_{1,2}$ on Lockdown policy}
			
		\end{subfigure}
		\hfill
		\caption{impact of varying $\lambda_i$, for $i=1,2$ }
		\label{fig5}
	\end{figure}

    \section*{Conclusion}
     
    The numerical simulations presented in this study highlight the rich spatio-temporal dynamics of the proposed degenerate SIR model. By simulating a one-dimensional domain divided into two interacting regions, we demonstrated how population movement, infection thresholds, and policy-driven boundary dynamics jointly influence the epidemic trajectory. The inclusion of degenerate diffusion captures natural mobility constraints, while the transition from Robin to Neumann boundary conditions models the enforcement of lockdowns in a mathematically consistent manner. Simulation results show the temporal evolution of each compartment, confirm theoretical predictions, and reveal the conditions under which regional isolation may mitigate disease spread. These findings underscore the model’s practical value in designing targeted interventions in spatially structured populations.
    \section*{Declaration}

\textbf{Funding} \\
The authors received no specific funding for this research.

\vspace{1em}
\textbf{Conflict of Interest} \\
The authors declare that they have no conflict of interest.

\vspace{1em}
\textbf{Ethical Approval} \\
This research did not involve human participants or animals.

\vspace{1em}
\textbf{Data Availability} \\
No data were generated or analyzed during this study.

\bibliography{Ref_ESZ}
\end{document}